\newtheorem{theorem}{Theorem}[section]
\newtheorem{lemma}{Lemma}[section]
\newtheorem{prop}{Proposition}[section]
\newtheorem{cor}{Corollary}[section]
\newtheorem{op}{Open Problem}
\theoremstyle{definition}
\newtheorem{rem}{Remark}[section]
\newcommand{\ds}{\displaystyle}
\newcommand{\R}{\mathbb R}
\newcommand{\de}{\partial}
\newcommand{\eps}{\varepsilon}
\DeclareMathOperator{\dist}{dist}
\begin{document}
\title{An optimization problem in thermal insulation with Robin boundary conditions}
 \author{Francesco Della Pietra$^{*}$ \\ Carlo Nitsch$^{*}$\\ Riccardo Scala$^{**}$\\ Cristina Trombetti%
       \thanks{Universit\`a degli studi di Napoli Federico II, Dipartimento di Matematica e Applicazioni ``R. Caccioppoli'', Via Cintia, Monte S. Angelo - 80126 Napoli, Italia.
       Email: f.dellapietra@unina.it, carlo.nitsch@unina.it, cristina.trombetti@unina.it  \newline $^{**}$ Universit\`a degli studi di Siena, Dipartimento di Ingegneria dell'Informazione e Scienze Ma\-te\-ma\-ti\-che, San Niccol\`o, via Roma, 56, 53100 Siena - Italia, riccardo.scala@unisi.it }}
\maketitle
\begin{abstract}
We study thermal insulating of a bounded body $\Omega\subset \R^n$. Under a prescribed heat source $f\geq0$,  we consider a model of heat transfer between $\Omega$ and the environment determined by convection; this corresponds, before insulation, to Robin boundary conditions. The body is then surrounded by a layer of insulating material of thickness of size $\eps>0$, and whose conductivity is also proportional to $\eps$. This corresponds to the case of a small amount of insulating material, with excellent insulating properties. We then compute the  $\Gamma$-limit of the energy functional $F_\eps$ and prove that this is a functional $F$ whose minimizers still satisfy an elliptic PDEs system with a non uniform Robin boundary condition depending on the distribution of insulating layer around $\Omega$. In a second step we study the maximization of heat content (which measures the goodness of the insulation) among all the possible distributions of insulating material with fixed mass, and prove an optimal upper bound in terms of geometric properties. Eventually we prove a conjecture in \cite{bbn2} which states that the ball surrounded by a uniform distribution of insulating material maximizes the heat content.
\end{abstract}

\textbf{Keywords:} optimal insulation, convective boundary conditions, Robin boundary conditions.

\textbf{ 2010 Mathematical Subject Classification:} 49J45, 35J25, 35B06, 80A20.

\section{Introduction}

Insulation is an old and multidisciplinary subject which is gaining more and more popularity after \emph{efficient energy use} has become a top environmental priority. While physicists and engineers are mainly concerned with new materials and technologies, a hot topic in mathematics is shape optimization. The question of thermal insulation of a body, when the heat transfer with environment is conveyed by conduction (Dirichlet boundary conditions) has been considered in the contribution \cite{ab80}, where the generality of the problem of reinforcement covers the setting of a heated material body, which we want to thermally insulate with the aid of a small amount of material surrounding it. This material is distributed on the boundary of our reference configuration $\Omega$, under the form of a thin insulating layer. Before \cite{ab80}, the reinforcement problem has been faced in \cite{brca80,cf80}, 
where related results have been obtained. The general setting considered in these papers (see also \cite{cakri,f80,d99} for related analyses) is the following: if $\Omega$ represents a body and $f$ a given heat source, one has to minimize the functional 
\begin{align}
 G_\eps(u):=\frac12\int_\Omega |\nabla u|^2dx+\frac\eps2\int_{\Sigma_\eps}|\nabla u|^2dx-\int_\Omega f u \;dx,
\end{align}
among all $u\in H^1_0(\bar \Omega\cup \Sigma_\eps)$. Here $\Sigma_\eps$ is  the volume, around $\Omega$, occupied by a thermal insulating material, of the form
\begin{align}\label{def_Sigmaeps}
 \Sigma_\eps:=\{x:x=\sigma+\eps sh(\sigma)\nu(\sigma),\;\sigma\in \partial \Omega,\;s\in(0,1)\},
\end{align}
with $\eps>0$, $\nu(\sigma)$ the outer normal to $\partial \Omega$ at $\sigma$, $h:\partial\Omega\mapsto (0,+\infty)$ a positive function, with suitably regularity, which accounts for the thickness of the layer at any point $\sigma\in \partial\Omega$.
The functional $G_\eps$ is defined on the space $H^1_0$, which means that by convention we are assuming the external temperature is equal to $0$. 
A minimizer $u_\eps$ of $G_\eps$ solves the system of partial differential equations in $\Omega_{\eps}=\bar \Omega\cup \Sigma_{\eps}$,
\[
\begin{cases}
-\Delta u_{\eps}=f&\hbox{in }\Omega\\[.2cm]
-\Delta u_{\eps}=0&\hbox{in }\Sigma_\eps\\[.2cm]
\ds u_{\eps}=0&\hbox{on }\partial\Omega_\eps\\[.2cm]
\ds\frac{\partial u_{\eps}^-}{\partial\nu}
=\eps\frac{\partial u_{\eps}^+}{\partial\nu}&\hbox{on }\partial\Omega.
\end{cases}
\]
Here  $\ds\frac{\partial u_{\eps}^-}{\partial\nu}$ and $\ds\frac{\partial u_{\eps}^+}{\partial\nu}$ represent the normal derivatives of $u_\eps$ from inside and outside $\Omega$, respectively. Indeed, the last equation is a heat transmission condition between the body $\Omega$ and the layer $\Sigma_\eps$, whose conductivity is much smaller than the one of $\Omega$, and it is assumed to have value $\eps$. Hence, both the conductivity and the thickness of the insulating layer are assumed to be of order $\eps$.

The analysis of the behaviour of $u_\eps$ as $\eps\rightarrow0$ has been accounted in \cite{ab80} via a $\Gamma$-convergence technique, which leads to the limiting functional 
\begin{align}
 G(u)=\frac12\int_\Omega |\nabla u|^2dx+\int_{\partial\Omega}\frac {u^2}{h}d\mathcal H^{n-1}(\sigma)-\int_\Omega f u\; dx,
\end{align}
defined on $H^1(\Omega)$.
A minimizer of $G$ solves 
\begin{equation}\label{bubuni}
\begin{cases}
-\Delta u=f&\hbox{in }\Omega\\[.2cm]
h\frac{\de u }{\de \nu}+u =0&\hbox{on }\partial\Omega.
\end{cases}
\end{equation}
Notice that here the function $h$ is fixed. The problem of determining the optimal distribution $h$, given a fixed amount $m$ of insulating material
$$m=\int_{\partial\Omega}h \;d\mathcal H^{n-1}(\sigma),$$
has been studied in \cite{bbn2} (see also \cite{bbn1}) together with the analysis of the optimal shape of $\Omega$ in order to maximize the heat insulation. For $\Omega$ and $h$ smooth enough and $f\equiv 1$ a solution $u$ to \eqref{bubuni} is such that the functional
$$G(u)= -\frac{1}{2}\int_\Omega u$$
is proportional (but with opposite sign) to the heat content. Therefore the smallest the minima of $G$ the better the insulation.
The optimal $h$ for a fixed domain $\Omega$, leads indeed to minimize the functional $G$ in both $u$ and $h$, and the Authors observed that for minima, $u$ and $h$ must be proportional on $\partial\Omega$ (the ratio will be a constant depending on the total mass $m$). To this concern, in \cite{bbn2} the following open problem has been raised:
\begin{op}\label{op1} Assuming constant heat source $f\equiv 1$. For a given amount of insulating material $m$ and given volume of the conductor $|\Omega|$, find the shape of $\Omega$ (and the related optimal distribution of insulating material $h$), which maximizes the heat content, strictly speaking the shape which maximizes $\displaystyle \int_\Omega u \,dx$, when $u$ solves \eqref{bubuni}.
\end{op}  
Later in the paper, we shall prove that balls and only balls, with an insulation layer uniformly distributed around the boundary, provide the answer to such a problem. But first let us describe the main result of our paper.

Our main goal is to study the $\Gamma$-convergence of functionals similar to $G_\eps$ which account for Robin boundary condition instead of Dirichlet one. The interest in Robin boundary conditions is justified by the fact that they describe convection heat transfer, which is a major mode of heat tranfer. Think for example the way a boiler, or a cup of tea, or a building, exchange heat with environment across their surface. 
This leads to the new functionals $F_\eps$, defined on $H^1(\R^n)$, given by
\begin{align}
F_{\eps}(u)=\frac12\int_\Omega|\nabla u|^2\,dx+\frac\eps2\int_{\Sigma_\eps}|\nabla u|^2\,dx+\frac{\beta}{2}\int_{\de \Omega_{\eps}} u^{2}d\mathcal H^{n-1}(\sigma) -\int_\Omega fu\,dx,
\end{align}
where $\beta>0$ is a fixed parameter. Minimizers of $F_\eps$ are solutions to
\begin{align}
\begin{cases}
-\Delta u_{\eps}=f&\hbox{in }\Omega\\[.2cm]
-\Delta u_{\eps}=0&\hbox{in }\Sigma_\eps\\[.2cm]
\ds\frac{\de u_{\eps}}{\de \nu}+\beta u_{\eps}=0&\hbox{on }\partial\Omega_\eps\\[.2cm]
\ds\frac{\partial u_{\eps}^-}{\partial\nu}
=\eps\frac{\partial u_{\eps}^+}{\partial\nu}&\hbox{on }\partial\Omega.
\end{cases} 
\end{align}
The b.c. $\ds\frac{\de u_{\eps}}{\de \nu}+\beta u_{\eps}=0$ tell us that the out-flux of heat is proportional to the temperature jump across the body surface, and once again this reminds us that the external temperature is set conventionally to zero. Once again both the conductivity and the thickness of the insulating layer are of order $\eps$.

We are interested in the asymptotics of $F_\eps$ as $\eps\rightarrow0$ and indeed we prove that, under suitable hypotheses of the regularity of $\partial\Omega$ and $h$, the functionals $F_\eps$ $\Gamma$-converge to
\begin{align}
F(u)=\frac{1}{2}\int_{\Omega} |\nabla u|^{2} dx +\frac{\beta}{2}\int_{\de\Omega}\frac{u^{2}}{1+\beta h}d\mathcal H^{n-1}(\sigma) -\int_{\Omega} fu\,dx,
\end{align}
with respect to the $L^2(\R^n)$ topology.

In a second part of the paper we deal with the problem of finding the optimal distribution of insulating material with a fixed mass $m$. 
In contrast to the homogeneous Dirichlet boundary condition case, the minimization with respect to $h$ is, in our case, more involved. For a given $u$, in order to prove existence of a minimizer $h\in L^2(\partial\Omega)$ we have to show some compactness estimates which would avoid concentration phenomena. It turns out that for optimality to hold, it is necessary that the ratio between $u$ and $1+\beta h$ is constant, at least on a part of the boundary $\partial\Omega$, that coincides with the set where the trace of $u$ exceeds a given threshold, which depends on $m$. The exact value of this threshold, denoted by $c_u$, is not explicit but depends implicitly on $h$ itself (see formula \eqref{c_u} below).

Finally, we prove a sharp upper bound for the heat content $\int_\Omega u\,dx$. Maximizing such a quantity among all domains $\Omega$ of given volume, is clearly the ultimate goal of the shape optimization. Our estimate provides an answer to Open Problem 1.

{
The paper is organized as follows: In section 
\ref{sec:setting} we set the notation,  describe our functional and the $\Gamma$-convergence analysis we will deal with. In section \ref{sec:3} we state and prove our main result concerning the computation of the $\Gamma$-limit $F$. In section \ref{sec:4} we pass to the analysis of the functional $F$; we show the existence and uniqueness of minimizers and we also characterize the shape of the optimal insulating layer $h$ for any given admissible variable $u$. Finally in Section \ref{sec:5} we deal with the shape optimization problem of maximizing the heat content among all the domains $\Omega$ and insulating distributions.  
}

\section{Setting of the problem}\label{sec:setting}
Let $\Omega$ be a bounded open set in $\R^{n}$, $n\ge 2$, with boundary of class $C^{1,1}$, and let $\nu$ denote its outer normal.
Let us fix a small parameter $\eps>0$, and let $h\colon \de\Omega\to \R$ be a bounded positive Lipschitz-continuous function.
The set $\Omega$
is surrounded by the layer $\Sigma_{\eps}$ defined in \eqref{def_Sigmaeps}, that we re-parametrize as follows
\[
\Sigma_{\eps}=\{\sigma +t\nu(\sigma),\;\sigma\text{ on }\de\Omega,\; 0<t<\eps h(\sigma)\}.
\]
We denote by
\[
\Omega_{\eps}=\bar \Omega\cup \Sigma_{\eps}.
\]
The problem of thermal insulation of $\Omega$ is governed by the following functional
\begin{align}\label{def_Feps}
F_{\eps}(v,h)=\frac12\int_\Omega|\nabla v|^2\,dx+\frac\eps2\int_{\Sigma_\eps}|\nabla v|^2\,dx+\frac{\beta}{2}\int_{\de \Omega_{\eps}} v^{2}d\mathcal H^{n-1}(\sigma) -\int_\Omega fv\,dx,
 \end{align}
where $f\in L^2(\Omega)$ is a non-negative function representing the external heating of $\Omega$,  and $\beta>0$ is a fixed parameter.
For $h$ fixed, let us consider the following minimization problem:
\[
\min_{v\in H^{1}(\Omega_{\eps})} F_{\eps}(v,h).
\]
The minimum $u_\eps$ will satisfy the following system of partial differential equations
\[
\begin{cases}
-\Delta u_{\eps}=f&\hbox{in }\Omega\\[.2cm]
-\Delta u_{\eps}=0&\hbox{in }\Sigma_\eps\\[.2cm]
\ds\frac{\de u_{\eps}}{\de \nu}+\beta u_{\eps}=0&\hbox{on }\partial\Omega_\eps\\[.2cm]
\ds\frac{\partial u^-_\eps}{\partial\nu}
=\eps\frac{\partial u^+_\eps}{\partial\nu}&\hbox{on }\partial\Omega.
\end{cases}
\]
Our asymptotic analysis concerns the behaviour of the functional $F_\eps(\cdot,h)$ as $\eps\rightarrow0$. For convenience we extend $F_\eps$ to $L^2(\Omega_\eps)$ setting $F_\eps(v,h)=+\infty$ if $v\notin H^1(\Omega_\eps)$.
We will prove that the $\Gamma-$limit of $F_{\eps}$ with respect to the $L^2(\R^n)$ topology is the functional $F$  defined by
\begin{equation}\label{def_F}
F(v,h)=\frac{1}{2}\int_{\Omega} |\nabla v|^{2} dx +\frac{1}{2}\int_{\de\Omega}\frac{v^{2}}{1+\beta h}d\mathcal H^{n-1}(\sigma) -\int_{\Omega} fv\,dx. 
\end{equation}
A minimizer of the limit functional $F$, denoted by $u\in H^{1}(\Omega)$, i.e. 
\[
F(u,h)=\min_{v\in H^{1}(\Omega)} F(v,h),
\]
satisfies the system
\begin{equation}\label{limiting_pb}
\begin{cases}
-\Delta u=f&\hbox{in }\Omega\\[.2cm]
(1+\beta h)\ds\frac{\de u }{\de \nu}+\beta u =0&\hbox{on }\partial\Omega.
\end{cases}
\end{equation}
{
Notice that the boundary condition for such a minimizer is still of Robin type, with coefficients depending also on the value of $h$.
}

\section{$\Gamma$-convergence}\label{sec:3}

In this section we study the limit of the functionals $F_\eps$ as $\eps\rightarrow0$. 
{
Since, in this section, $h$ is a fixed positive and bounded function, we drop the dependence on $h$ in the functionals $F_\eps$ and $F$ defined in \eqref{def_Feps} and \eqref{def_F}, writing $F_\eps(\cdot)=F_\eps(\cdot,h)$ (and similarly for $F$). }
We first extend $F_\eps$ on $L^2(\R^n)$ as follows
\begin{align}
\label{gamma1}
 \bar F_{\eps}(v)=\begin{cases}
                F_{\eps}(v)&\text{if }v\in H^1(\Omega_\eps),\\
                +\infty&\text{otherwise in }L^2(\R^n)\setminus H^1(\Omega_\eps).
                \end{cases}
\end{align}
The main result of this section is the following.
\begin{theorem}
Let $\Omega$ be a bounded open set in $\R^{n}$, $n\ge 2$, with $C^{1,1}$ boundary. Let $h$ be a {
fixed positive Lipschitz function}. 
Let $\bar F_{\eps}$ be as in \eqref{gamma1}. Then $\bar F_\eps$ $\Gamma$-converges to the limit functional 
\begin{align*}
 \bar F(v)=\begin{cases}
                F(v)&\text{if }v\in H^1(\Omega),\\
                +\infty&\text{otherwise in }L^2(\R^n)\setminus H^1(\Omega).
                \end{cases}
\end{align*}
with respect to the $L^2(\R^n)$ topology.
\end{theorem}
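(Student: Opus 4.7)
I would follow the standard two-step $\Gamma$-convergence strategy: a $\liminf$ inequality along arbitrary converging sequences, and a matching upper bound produced by an explicit recovery sequence.

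For the $\liminf$ inequality, take $v_\eps\to v$ in $L^2(\R^n)$ with $\liminf \bar F_\eps(v_\eps)<+\infty$ and, along a subsequence, assume each quadratic term of $F_\eps$ is bounded. The bound on $\int_\Omega|\nabla v_\eps|^2$ together with $L^2$ convergence gives $v_\eps|_\Omega \rightharpoonup v$ weakly in $H^1(\Omega)$, hence $v\in H^1(\Omega)$, $\liminf\int_\Omega|\nabla v_\eps|^2\ge\int_\Omega|\nabla v|^2$, and by trace compactness $v_\eps|_{\de\Omega}\to v|_{\de\Omega}$ strongly in $L^2(\de\Omega)$. The crucial step is to bound the layer energy plus the outer boundary integral from below. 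Parametrize $\Sigma_\eps$ by $\Phi(\sigma,t)=\sigma+t\nu(\sigma)$ with $\sigma\in\de\Omega$ and $t\in(0,\eps h(\sigma))$; thanks to the $C^{1,1}$ regularity of $\de\Omega$ and the Lipschitz continuity of $h$, the relevant Jacobians (of $\Phi$ and of the map $\sigma\mapsto\sigma+\eps h(\sigma)\nu(\sigma)$ parametrizing $\de\Omega_\eps$) are $1+O(\eps)$ uniformly. Writing $\tilde v_\eps(\sigma,t):=v_\eps(\Phi(\sigma,t))$, I bound the full gradient below by the normal derivative and apply, for $\mathcal H^{n-1}$-a.e.\ $\sigma$, the elementary one-dimensional minimum
\[
\min\Bigl\{\eps\int_0^{\eps h(\sigma)}|w'|^2\,dt+\beta|w(\eps h(\sigma))|^2 \ :\ w(0)=\tilde v_\eps(\sigma,0)\Bigr\}\ =\ \frac{\beta\,|\tilde v_\eps(\sigma,0)|^2}{1+\beta h(\sigma)},
\]
attained by the affine $w(t)=\tilde v_\eps(\sigma,0)\bigl(1-\beta t/(\eps(1+\beta h(\sigma)))\bigr)$. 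Integrating in $\sigma$ and using $L^2$ trace convergence on the right-hand side yields
\[
\liminf_{\eps\to 0}\Bigl(\tfrac{\eps}{2}\int_{\Sigma_\eps}|\nabla v_\eps|^2+\tfrac{\beta}{2}\int_{\de\Omega_\eps}v_\eps^2\,\h\Bigr)\ \ge\ \frac{\beta}{2}\int_{\de\Omega}\frac{v^2}{1+\beta h}\,\h.
\]
Combined with the Dirichlet lower semicontinuity and the linear continuity of $v_\eps\mapsto\int_\Omega fv_\eps$, this recovers $\bar F(v)$.

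For the $\limsup$ inequality, I would saturate the one-dimensional inequality above by the explicit recovery sequence $v_\eps=v$ on $\Omega$ and, for $x=\sigma+t\nu(\sigma)\in\Sigma_\eps$,
\[
v_\eps(x)\ :=\ v(\sigma)\Bigl(1-\frac{\beta t}{\eps(1+\beta h(\sigma))}\Bigr).
\]
A direct calculation (the reverse of the $\liminf$ one) gives $F_\eps(v_\eps)\to F(v)$, provided $v|_{\de\Omega}$ admits tangential derivatives so that the tangential component of $\nabla v_\eps$ in $\Sigma_\eps$ is controlled; that contribution carries an extra factor $\eps$ and vanishes in the limit. For a general $v\in H^1(\Omega)$ one first runs the construction on a dense subclass (e.g.\ $C^\infty(\bar\Omega)$, dense in $H^1(\Omega)$ since $\Omega$ is smooth) and then upgrades to arbitrary $v$ by a standard diagonal argument, using continuity of $\bar F$ under $H^1(\Omega)$-convergence: $1/(1+\beta h)$ is bounded and the trace map $H^1(\Omega)\to L^2(\de\Omega)$ is continuous.

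The main technical obstacle I anticipate is the uniform bookkeeping of the geometric factors $1+O(\eps)$ arising from the two Jacobians, since they appear multiplicatively in \emph{both} the $\liminf$ and $\limsup$ calculations and must not corrupt the matching constants; this is precisely the role of the $C^{1,1}$ hypothesis on $\de\Omega$ and Lipschitz regularity of $h$. Once the error terms are controlled, everything collapses onto the explicit one-dimensional minimisation, and the $\Gamma$-convergence follows.
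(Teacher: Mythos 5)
Your proposal is correct and follows essentially the same route as the paper: the same fiberwise parametrization of $\Sigma_\eps$ with $1+O(\eps)$ Jacobians, a lower bound obtained by reducing the layer energy to the normal derivative and solving the one-dimensional problem along each fiber (the paper reaches the identical constant $\beta/(1+\beta h)$ via H\"older plus Young's inequality with the optimal parameter $\lambda=1+\beta h$), and the same affine recovery profile $1-\beta t/(\eps(1+\beta h(\sigma)))$. The only minor difference is in the limsup step: the paper multiplies the $H^1(\R^n)$ extension of $v$ by the cutoff $\varphi_\eps$, which handles arbitrary $v\in H^1(\Omega)$ directly, whereas your construction $v(\sigma)\varphi_\eps$ needs the density-plus-diagonal argument you correctly include.
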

\begin{proof}
As usual, we split the proof into two main steps, namely providing a $\Gamma$-liminf and $\Gamma$-limsup inequalities. First, we show that for any $v\in L^2(\R^n)$ and any sequence $v_\eps\in L^2(\R^n)$ converging to $v$ in $L^2(\R^n)$, we have
\begin{equation}\label{liminf}
 \liminf _{\eps\rightarrow0}\bar F_\eps(v_\eps)\geq \bar F(v).
\end{equation}
Second, we will show that for any $v\in H^{1}(\Omega)$ there exists a sequence $v_{\eps}\in H^{1}(\R^{n})$ such that 
\begin{equation}
\label{limsup0}
\limsup_{\eps\to 0} \bar F_{\eps}(v_{\eps}) \le \bar F(v).
\end{equation}

\noindent\textit{Step 1: proof of the $\Gamma$-liminf inequality \eqref{liminf}.}
Let us assume without loss of generality that $v_\eps\in H^1(\R^n)$, that the liminf is a limit, and that it is finite. In particular there is a constant $C>0$ such that 
\begin{equation}\label{bound}
 \bar F_\eps(v_\eps)<C.
\end{equation}
From now on we denote by $C$ a generic positive constant which might change from line to line.
First we observe that since the $v_\eps\rightarrow v$ in $L^2(\R^n)$ we straightforwardly have
\begin{equation*}
\int_\Omega f v_\eps \;dx\rightarrow \int_\Omega f v \;dx. 
\end{equation*}
Moreover, we also get, from \eqref{bound},
\begin{equation*}
\int_\Omega|\nabla v_\eps|^2dx<C, 
\end{equation*}
for some positive constant $C$. Thus we infer, up to subsequences,
\begin{equation}\label{convergence}
 v_\eps\rightharpoonup v\text{ weakly in }H^1(\Omega).
\end{equation}
Therefore $v\in H^1(\Omega)$.
As a consequence we find out
\begin{equation}
\liminf_{\eps\rightarrow0} \frac12\int_\Omega|\nabla v_\eps|^2dx-\int_\Omega f v_\eps \;dx\geq\frac12\int_\Omega|\nabla v|^2dx-\int_\Omega f v \;dx,
\end{equation}
and then, in order to prove \eqref{liminf}, it suffices to show 
\begin{align}\label{claimliminf}
 \liminf_{\eps\rightarrow0} \Big(\frac\eps2\int_{\Sigma_\eps}|\nabla v_\eps|^2\,dx+\frac{\beta}{2}\int_{\de \Omega_{\eps}} v_\eps^{2}\;d\mathcal H^{n-1}(\sigma)\Big)\geq \frac{\beta}{2}\int_{\de\Omega}\frac{v^{2}}{1+\beta h}d\mathcal H^{n-1}(\sigma).
\end{align}
To this aim, we focus on the first term on the left-hand side, and following the argument used in \cite{ab80} we write it as
\begin{equation}
\label{integrazione}
 \frac{\eps}{2}\int_{\Sigma_\eps}|\nabla v_\eps|^2\,dx=\frac\eps2\int_{\de \Omega}\int_0^{\eps h(\sigma)}|\nabla v_\eps(\sigma+t\nu(\sigma))|^2(1+\eps R(\sigma,t))\,dt\;d\mathcal H^{n-1}(\sigma),
\end{equation}
where $R(\sigma,t)$ is a suitable remainder, which, thanks to the Lipschitz continuity of $\nu$ and $h$, is uniformly bounded by a constant, i.e. $|R(\sigma,t)|\leq C$. Therefore
\begin{align*}
 \frac{\eps}{2}\int_{\Sigma_\eps}|\nabla v_\eps|^2\,dx\geq \frac{\eps(1-\eps C)}{2}\int_{\de \Omega}\int_0^{\eps h(\sigma)}|\nabla v_\eps(\sigma+t\nu(\sigma))|^2\,dt\;d\mathcal H^{n-1}(\sigma).
\end{align*}
Now by H\"older inequality, for all $\sigma\in \partial \Omega$,
\begin{align*}
 \int_0^{\eps h(\sigma)}|\nabla v_\eps(\sigma+t\nu(\sigma))|^2\,dt&\geq \frac{1}{\eps h(\sigma)}\Big(\int_0^{\eps h(\sigma)}|\nabla v_\eps(\sigma+t\nu(\sigma))|\,dt\Big)^2\\
 &\geq\frac{1}{\eps h(\sigma)}\Big(\int_0^{\eps h(\sigma)}\nabla v_\eps(\sigma+t\nu(\sigma))\cdot \nu(\sigma)\,dt\Big)^2\\
 &=\frac{\big(v_\eps(\sigma+\eps h(\sigma)\nu(\sigma))-v_\eps(\sigma)\big)^2}{\eps h(\sigma)}.
\end{align*}
We have then obtained 
\begin{align}\label{use1}
 \frac{\eps}{2}\int_{\Sigma_\eps}|\nabla v_\eps|^2\,dx\geq \frac{\eps(1-\eps C)}{2}\int_{\de \Omega}\frac{\big(v_\eps(\sigma+\eps h(\sigma)\nu(\sigma))-v_\eps(\sigma)\big)^2}{\eps h(\sigma)}d\mathcal H^{n-1}(\sigma).
\end{align}
Let us now focus on the second term in the left-hand side of \eqref{claimliminf}.
Again by a change of variables we find that 
\begin{equation*}
 \frac{\beta}{2}\int_{\de \Omega_{\eps}} v_\eps(\sigma)^{2}d\sigma=\frac\beta2\int_{\de \Omega}v_\eps(\sigma+\eps h(\sigma)\nu(\sigma))^2(1+\eps R'_\eps(\sigma))\,d\sigma,
\end{equation*}
where $R_\eps'(\sigma)$ is bounded by a constant, uniformly on $\partial\Omega$, thanks to the hypothesis of Lipschitz continuity of $\nu$ and $h$. In particular there is a constant $C>0$ such that 
\begin{equation}\label{use2}
 \frac{\beta}{2}\int_{\de \Omega_{\eps}} v_\eps(\sigma)^{2}d\mathcal H^{n-1}(\sigma)\geq\frac\beta2(1-\eps C)\int_{\de \Omega}v_\eps(\sigma+\eps h(\sigma)\nu(\sigma))^2\,d\mathcal H^{n-1}(\sigma).
\end{equation}
Going back to \eqref{claimliminf}, with \eqref{use1} and \eqref{use2} at our disposal, we are left to prove that 
\begin{align}\label{claimliminf2}
\liminf_{\eps\rightarrow0} &\Big(\frac\eps2\int_{\de \Omega}\frac{\big(v_\eps(\sigma+\eps h(\sigma)\nu(\sigma))-v_\eps(\sigma)\big)^2}{\eps h(\sigma)}d\mathcal H^{n-1}(\sigma)+\frac\beta2\int_{\de \Omega}v_\eps(\sigma+\eps h(\sigma)\nu(\sigma))^2\,d\mathcal H^{n-1}(\sigma)\Big)\nonumber\\
&\geq \frac{\beta}{2}\int_{\de\Omega}\frac{v(\sigma)^{2}}{1+\beta h(\sigma)}d\mathcal H^{n-1}(\sigma). 
\end{align}
We have used here that the term 
\begin{equation*}
 R(\eps):=\eps C\frac\eps2\int_{\de \Omega}\frac{\big(v_\eps(\sigma+\eps h(\sigma)\nu(\sigma))-v_\eps(\sigma)\big)^2}{\eps h(\sigma)}d\mathcal H^{n-1}(\sigma)+\eps C\frac\beta2\int_{\de \Omega}v_\eps(\sigma+\eps h(\sigma)\nu(\sigma))^2\,d\mathcal H^{n-1}(\sigma),
\end{equation*}
is infinitesimal as $\eps\rightarrow0$, because by \eqref{use1} and \eqref{use2} and by assumption \eqref{bound}, it turns out that, for $\eps$ small enough, $R(\eps)/\eps$ is bounded. 

In order to prove \eqref{claimliminf2} we use Young's inequality and write, for any $\lambda>0$, 
\begin{align*}
 &\frac\eps2\int_{\de \Omega}\frac{\big(v_\eps(\sigma+\eps h(\sigma)\nu(\sigma))-v_\eps(\sigma)\big)^2}{\eps h(\sigma)}d\mathcal H^{n-1}(\sigma)+\frac\beta2\int_{\de \Omega}v_\eps(\sigma+\eps h(\sigma)\nu(\sigma))^2\,d\mathcal H^{n-1}(\sigma)\\
 &=\int_{\de \Omega}\frac{v_\eps(\sigma+\eps h(\sigma)\nu(\sigma))^2+v_\eps(\sigma)^2-2v_\eps(\sigma+\eps h(\sigma)\nu(\sigma))v_\eps(\sigma)}{2 h(\sigma)}d\mathcal H^{n-1}(\sigma)\\
 &\;\;+\frac\beta2\int_{\de \Omega}v_\eps(\sigma+\eps h(\sigma)\nu(\sigma))^2\,d\mathcal H^{n-1}(\sigma)\\
 &\geq\frac12\int_{\de\Omega}\big(\frac{1-\lambda+\beta h(\sigma)}{h(\sigma)}\big)v_\eps(\sigma+\eps h(\sigma)\nu(\sigma))^2+\big(\frac{1}{h(\sigma)}-\frac{1}{\lambda h(\sigma)}\big)v_\eps(\sigma)^2d\mathcal H^{n-1}(\sigma),
\end{align*}
and so setting $\lambda=\lambda(\sigma):=1+\beta h(\sigma)$ we finally get
\begin{align*}
 &\frac\eps2\int_{\de \Omega}\frac{\big(v_\eps(\sigma+\eps h(\sigma)\nu(\sigma))-v_\eps(\sigma)\big)^2}{\eps h(\sigma)}d\mathcal H^{n-1}(\sigma)+\frac\beta2\int_{\de \Omega}v_\eps(\sigma+\eps h(\sigma)\nu(\sigma))^2\,d\mathcal H^{n-1}(\sigma)\\
 &\geq\frac\beta2\int_{\de\Omega}\frac{v_\eps(\sigma)^2}{1+\beta h(\sigma)}d\mathcal H^{n-1}(\sigma).
\end{align*}
Now \eqref{claimliminf2} follows by lower semicontinuity, thanks to \eqref{convergence}. The proof of the $\Gamma$-liminf inequality is achieved.
\newline

\noindent\textit{Step 2: proof of the $\Gamma$-limsup inequality \eqref{limsup}.}
Let $v\in H^{1}(\Omega)$. By the regularity of $\de \Omega$, we may extend $v$ in $H^{1}(\R^{n})$. Moreover, we extend $h$ in $\Sigma_\eps$ by setting $h(x)=h(\sigma)$, for all $x$ of the form $x=\sigma +t\nu(\sigma)\in \Sigma_{\eps}$ {
for some $t\in(0,\eps h(\sigma))$}.
Let us consider { $\varphi_{\eps}:\Sigma_\eps\rightarrow [0,1]$} given by
\[
\varphi_{\eps}(x)=1-\frac{\beta d(x)}{\eps(1+\beta h(x))},
\]
where $d(x)=\dist(x,\Omega)$. { Notice that $\varphi_\eps=1$ on $\partial \Omega$ but it is always strictly positive on $\partial \Omega_\eps$. We further extend $\varphi_\eps$ to (not relabelled) $\varphi_\eps\in L^2(\R^n)$ by setting $\varphi_\eps=1$ in $\Omega$ and $\varphi_\eps=0$ on $\R^n\setminus \Omega_\eps$.} We define  $v_{\eps}=v\varphi_{\eps}$, and we trivially see  that $v_{\eps}\to v$ in $L^{2}(\Omega)$ (since $v_\eps=v$ in $\Omega$). { Moreover an easy check shows that $v_\eps\in H^1(\Omega_\eps)$.} We will prove that
\[
\limsup_{\eps\to 0} \bar F_{\eps}(v_{\eps}) \le \bar F(v).
\]
To this aim, it suffices to prove that
\begin{equation}
\label{limsup}
\limsup_{\eps\to 0}\left( \eps \int_{\Sigma_{\eps}} \left|\nabla v_{\eps}\right|^{2}dx+
\beta \int_{\de\Omega_{\eps}} \left| v_{\eps}\right|^{2}d\mathcal H^{n-1}(\sigma) \right) \le {\beta} \int_{\de\Omega} \frac{v^{2}}{1+\beta h} d\mathcal H^{n-1}(\sigma).
\end{equation}
As regards the first term in the left hand side of \eqref{limsup}, let $\lambda\in(0,1)$ be arbitrary (we will choose it later). It holds
\begin{equation}
\label{pass}
{\eps} \int_{\Sigma_{\eps}} \left|\nabla v \varphi_{\eps}+\nabla \varphi_{\eps} v\right|^{2}dx\le 
\frac{\eps}{\lambda} \int_{\Sigma_{\eps}} \left| \nabla \varphi_{\eps}v \right|^{2} \,dx + \frac{\eps}{(1-\lambda)} \int_{\Sigma_{\eps}} \left|\nabla v \varphi_{\eps}\right|^{2}dx.
\end{equation}
Furthermore, { $d$ denoting the distance from $\Omega$}, we have
\begin{multline*}
\left|\nabla \varphi_{\eps}\right|^{2} \le \frac{\beta^{2}}{\eps^{2}} \left|\frac{\nabla d}{(1+\beta h)(1-d)}\right|^{2} (1-d)+\frac{\beta^{4}}{\eps^{2}} \left|\frac{\nabla h}{(1+\beta h)^{2}}\right|^{2}d\\
\le 
\frac{\beta^{2}}{\eps^{2}}  \frac{1}{(1+\beta h)^{2}(1-d)}  +\frac{\beta^{4}C}{\eps } \left |\nabla h\right|^{2},
\end{multline*}
{
where we have used that $d\leq \eps h\leq \eps C$ in $\Sigma_\eps$.}
Hence
\[
\frac{\eps}{\lambda} \int_{\Sigma_{\eps}} \left| \nabla \varphi_{\eps}v  \right|^{2}  \,dx
\le \frac{\beta^{2}}{\eps\lambda} \int_{\Sigma_{\eps}} \frac{v^{2}}{(1+\beta h)^{2}(1-d)}dx
+\frac{\beta^{4}C}{\lambda} \int_{\Sigma_{\eps}} \left|\nabla h\right|^{2} v^{2}dx.
\]
On the other hand, reasoning similarly as in \eqref{integrazione}, we get
\begin{multline*}
\int_{\Sigma_{\eps}} \frac{v^{2}}{(1+\beta h)^{2}(1-d)}dx
= \\=\eps \int_{\de \Omega} h(\sigma) d\mathcal H^{n-1}(\sigma) \int_{0}^{1} 
\frac{v(\sigma+\eps t h(\sigma)\nu(\sigma))^{2}}{(1+\beta h(\sigma))^{2}(1-d(\sigma+\eps t h(\sigma)\nu(\sigma)))}(1+\eps R(\sigma,t))dt
\end{multline*}
where $\left|R(\sigma,t)\right| \le C$. Then, being $h$ Lipschitz, we get
\[
\limsup_{\eps\rightarrow0} \frac{\eps}{\lambda} \int_{\Sigma_{\eps}} \left| \nabla \varphi_{\eps}v  \right|^{2}  \,dx
\le 
\frac{\beta^{2}}{\lambda} \int_{\de \Omega} \frac{h(\sigma) v(\sigma)^2}{(1+\beta h(\sigma))^{2}} d\mathcal H^{n-1}(\sigma).
\]
Last term in \eqref{pass} vanishes as $\eps\to 0$, then, by the arbitrariness of $\lambda$ it holds that
\begin{equation}
\label{primotermine}
\limsup_{\eps\to 0} \eps \int_{\Sigma_{\eps}} \left|\nabla v_{\eps}\right|^{2}dx
\le
{\beta^{2}} \int_{\de \Omega} \frac{h(\sigma) v(\sigma)^{2}}{(1+\beta h(\sigma))^{2}} d\sigma.
\end{equation}
As regards the second term in the left hand side of \eqref{limsup}, by a change of variables we get that 
\[
\int_{\de\Omega_{\eps}}  v_{\eps}(\sigma)^{2}d\mathcal H^{n-1}(\sigma)=
\int_{\de \Omega} v_{\eps}(\sigma +\eps h(\sigma)\nu(\sigma))^{2}(1+\eps R'(\sigma))d\mathcal H^{n-1}(\sigma).
\]
Then, passing to the limit we obtain
\begin{equation}
\label{secondotermine}
\lim_{\eps\to 0}\int_{\de\Omega_{\eps}}  v_{\eps}(\sigma)^{2}d\mathcal H^{n-1}(\sigma)=
\int_{\de \Omega} \frac{v(\sigma)^{2}}{(1+\beta h(\sigma))^{2}}d\mathcal H^{n-1}(\sigma).
\end{equation}
Finally, using \eqref{primotermine} and \eqref{secondotermine} we get \eqref{limsup}.
Hence, the proof of the theorem is complete.
\end{proof}

\section{Analysis of the minimum}\label{sec:4}
In this section we consider the optimization of the shape of the insulating layer surrounding $\Omega$. 

{
In the computation of the $\Gamma$-limit of the functional $\bar{F}_\eps$ we have chosen and fixed a priori $h$ which was Lipschitz continuous, bounded, and strictly positive. Once the $\Gamma$-limit has been obtained, we are now interested into varying $h$ in order to achieve the optimal insulation.}

To this aim we first fix the amount of insulating material we want to exploit. 
For any positive number $m$, we set
\[
\mathcal H_{m}(\de\Omega)=\{h\in L^{1}(\de\Omega),\;h\ge 0 \colon \int_{\de \Omega} h\,d\sigma=m\}.
\]

We will now analyse the behaviour of the limit functional $\bar F(v,h)$. Specifically, we fix $m>0$ and allow $h\in \mathcal H_m(\partial\Omega)$ to vary. To address the optimality of the insulating problem with respect to $h\in\mathcal H_m(\partial\Omega)$ we study the minimization problem
\begin{align}\label{minimum_couple}
 &\min_{(v,h)\in H^1(\Omega)\times \mathcal H_m(\partial\Omega)} F(v,h)  \nonumber\\
 &=\min_{(v,h)\in H^1(\Omega)\times \mathcal H_m(\partial\Omega)}\left\{\frac{1}{2}\int_{\Omega} |\nabla v|^{2} dx +\frac{1}{2}\int_{\de\Omega}\frac{v^{2}}{1+\beta h}d\mathcal H^{n-1}(\sigma) -\int_{\Omega} fv\,dx  \; \right\}.
\end{align}

{
Notice that here we allow $h$ to be $0$ on some subdomain of $\partial \Omega$, and we also drop the requirement of Lipschitz continuity. This procedure is justified by the fact that, in general, the infimum of the functional among the class of positive Lipschitz continuous maps $h$ is not achieved since minimizers are not strictly positive (and neither Lipschitz continuous).}

We are now in position to state the following existence result:

{\begin{theorem}\label{main_teo}
Given any $\beta,m>0$, there exists a couple $(u,h)\in H^{1}(\Omega)\times L^{2}(\de \Omega)$, with $h\in \mathcal H_{m}(\partial \Omega)$, which minimizes \eqref{minimum_couple}. Moreover, 
\begin{align}\label{constant_cu}
 h(\sigma):=\begin{cases}
              \frac{|u(\sigma)|}{c_{u}\beta}-\frac1\beta&\text{if }|u(\sigma)|\ge c_{u},\\
              0&\text{otherwise },
             \end{cases} 
\end{align}
where $c_u$ is the unique positive constant satisfying 
\begin{align}\label{c_u_def}
 c_u=\Big(\frac{1}{|\{|u|\geq c_u\}|+m\beta}\Big)\int_{\{|u|\geq c_u\}}|u(\sigma)|d\mathcal H^{n-1}(\sigma).
\end{align}
Furthermore the couple $(u,h)$  is a solution to \eqref{limiting_pb}, and is also unique if the domain $\Omega$ is connected. 
\end{theorem}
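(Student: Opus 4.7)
My plan is to first carry out the inner minimization of $F(v,\cdot)$ over $h\in\mathcal{H}_{m}(\partial\Omega)$ with $v$ fixed, obtaining an explicit formula, and then minimize the resulting reduced functional $\tilde F(v):=\min_{h}F(v,h)$ over $H^{1}(\Omega)$ by the direct method. For $v$ fixed, the only $h$-dependent term of $F(v,h)$ is $\tfrac12\int_{\partial\Omega}v^{2}/(1+\beta h)\,d\mathcal H^{n-1}$, which is strictly convex in $h$. A pointwise Lagrange-multiplier computation, with multiplier $c>0$ for the constraint $\int h=m$ together with the obstacle $h\ge 0$, gives $h(\sigma)=\max\bigl(|v(\sigma)|/(c\beta)-1/\beta,\,0\bigr)$; the mass constraint then reduces to
\[
 \int_{\partial\Omega}(|v|-c)_{+}\,d\mathcal H^{n-1}=c\beta m,
\]
which is precisely \eqref{c_u_def}. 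The left-hand side is Lipschitz, non-negative, non-increasing in $c$, equal to $\int_{\partial\Omega}|v|\,d\mathcal H^{n-1}$ at $c=0$ and vanishing for $c\ge\esssup|v|$; the right-hand side is linear with positive slope $\beta m$. Hence a unique positive solution $c_{v}$ exists, and it delivers the formula \eqref{constant_cu} for the optimal $h_{v}$.

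\textbf{Existence.} I would then show that the reduced functional $\tilde F(v)=F(v,h_{v})$ attains its minimum on $H^{1}(\Omega)$ by the direct method. Writing $G(v):=\int_{\partial\Omega}v^{2}/(1+\beta h_{v})\,d\mathcal H^{n-1}$, Cauchy--Schwarz yields $\bigl(\int_{\partial\Omega}|v|\,d\mathcal H^{n-1}\bigr)^{2}\le G(v)\,(|\partial\Omega|+\beta m)$, which combined with the Poincar\'e-type inequality $\|v\|_{H^{1}(\Omega)}^{2}\le C\bigl(\|\nabla v\|_{L^{2}(\Omega)}^{2}+\|v\|_{L^{1}(\partial\Omega)}^{2}\bigr)$ (obtained by a standard compactness-contradiction using the $C^{1,1}$ regularity of $\partial\Omega$) gives coercivity of $\tilde F$. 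For weak lower semicontinuity, a bounded minimizing sequence $v_{k}\rightharpoonup v$ in $H^{1}(\Omega)$ converges strongly in $L^{2}(\partial\Omega)$ by compact trace embedding; the function $c\mapsto\int_{\partial\Omega}(|v|-c)_{+}\,d\mathcal H^{n-1}-c\beta m$ depends Lipschitz-continuously on $|v|\in L^{1}(\partial\Omega)$, so by uniqueness of its positive root one gets $c_{v_{k}}\to c_{v}$ and then $G(v_{k})\to G(v)$. Together with the weak lsc of the Dirichlet integral, this gives weak lsc of $\tilde F$; the direct method produces a minimizer $u\in H^{1}(\Omega)$. The couple $(u,h_{u})$ with $h_{u}$ given by \eqref{constant_cu} then minimizes \eqref{minimum_couple}, and the Euler--Lagrange equation obtained by varying $v$ with $h=h_{u}$ fixed is precisely \eqref{limiting_pb}.

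\textbf{Uniqueness on connected $\Omega$ and main obstacle.} The integrand $v^{2}/(1+\beta h)$ is the perspective of the convex function $t\mapsto t^{2}$, so $F$ is jointly convex in $(v,h)$; an explicit computation gives Hessian $\tfrac{2}{(1+\beta h)^{3}}\bigl[a(1+\beta h)-v\beta b\bigr]^{2}$ in direction $(a,b)$. Moreover the Dirichlet integral is strictly convex modulo constants on connected $\Omega$. If $(u_{1},h_{1})$ and $(u_{2},h_{2})$ were two minimizers, then $F$ would be constant on the segment joining them, so first $\nabla u_{1}=\nabla u_{2}$ a.e.\ and $u_{2}=u_{1}+c$ for a real constant $c$; second, the pointwise Hessian-kernel condition $(u_{2}-u_{1})(1+\beta h_{s})=u_{s}\beta(h_{2}-h_{1})$ must hold for every $s\in[0,1]$. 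Evaluating at $s=0$ gives $h_{2}-h_{1}=c(1+\beta h_{1})/(\beta u_{1})$ on $\{u_{1}>0\}$; integrating over $\partial\Omega$ and using $\int h_{1}=\int h_{2}=m$ yields $c\int(1+\beta h_{1})/(\beta u_{1})\,d\mathcal H^{n-1}=0$. The strong maximum principle applied to \eqref{limiting_pb} (with $f\ge 0$, $f\not\equiv 0$) gives $u_{1}>0$ on $\overline{\Omega}$, whence $c=0$, $u_{1}=u_{2}$, and $h_{1}=h_{2}$ by the explicit formula. The most delicate points I foresee are the weak lower semicontinuity of $\tilde F$, which hinges on continuous dependence of the implicitly defined threshold $c_{v}$ on $v$, and the clean extraction of the pointwise Hessian-kernel condition from equality in joint convexity along the segment combined with the mass constraint.
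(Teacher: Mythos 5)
Your proposal is correct, and its skeleton coincides with the paper's: the explicit inner minimizer $h_v$ with the implicit equation for $c_v$ (the paper's Lemma \ref{lemma1} and Proposition \ref{h_exists}), the Cauchy--Schwarz bound $\big(\int_{\partial\Omega}|v|\big)^2\le G(v)\,(Per(\Omega)+\beta m)$ for coercivity (Proposition \ref{coer}), and uniqueness via strict joint convexity of the perspective integrand combined with ``strict convexity modulo constants'' of the Dirichlet energy on a connected $\Omega$ and the mass constraint $\int h_1=\int h_2=m$ (Proposition \ref{uniqueness_prop}; your Hessian-kernel computation is an infinitesimal version of the paper's equality analysis in the Cauchy--Schwarz steps). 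Where you genuinely diverge is the existence step. The paper takes a minimizing sequence $(u_n,h_n)$ with $u_n$ solving the Euler--Lagrange system for $h_n$, uses the maximum principle to get a uniform lower bound $u_n\ge K>0$, deduces that $c_{u_n}$ stays away from $0$, and then passes to the limit in $(u_n,\bar h_n)$. You instead minimize the reduced functional $\tilde F(v)=F(v,h_v)$ directly, proving its weak lower semicontinuity from the strong $L^2(\partial\Omega)$ convergence of traces together with the Lipschitz dependence of the root $c_v$ on $\|v\|_{L^1(\partial\Omega)}$ (indeed $|c_{v}-c_{w}|\le \|v-w\|_{L^1(\partial\Omega)}/(\beta m)$, since the function $c\mapsto\int(|v|-c)_+ -c\beta m$ decreases with slope at most $-\beta m$). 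This buys you a cleaner and more self-contained existence proof: no maximum principle, no need to rule out $c_{u_n}\to 0$, and an explicit continuity statement where the paper only asserts that ``$\bar h_n$ converges to some $h$''. What the paper's route buys in exchange is the positivity $u\ge K>0$, which it anyway needs (as do you) for the uniqueness step.

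Two small points to tighten. First, the pointwise Lagrange-multiplier computation for the inner problem is only a heuristic as stated; since the constraint is linear and the integrand is convex in $h$, the first-order condition is sufficient, and the one-line verification is the convexity inequality $v^2/(1+\beta\hat h)\ge v^2/(1+\beta h_v)-\beta v^2(1+\beta h_v)^{-2}(\hat h-h_v)$ integrated over $\partial\Omega$ (this is what the paper's Proposition \ref{h_exists} does via the function $\psi$); note also that this argument covers all competitors $\hat h\in\mathcal H_m(\partial\Omega)$, not only $L^2$ ones. Second, both your root-finding for $c_v$ and your use of the strong maximum principle implicitly require $v\not\equiv 0$ on $\partial\Omega$ and $f\not\equiv 0$ respectively; the degenerate case $f\equiv 0$ gives $u\equiv 0$ and should be dispatched separately (the paper has the same implicit assumption).
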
}
{
\begin{rem}
 The constant $c_u$ is determined implicitly by equation \eqref{c_u_def}. As we will see (Lemma \ref{lemma1} below), for every $u\in H^1(\Omega)$ the constant $c_u$ is well-defined and positive whenever $u$ is not identically null on $\partial \Omega$. Moreover it depends on the amount of insulating material we have at disposal, namely the constant $m>0$.
\end{rem}
}

\subsection{Proof of Theorem \ref{main_teo}}

In order to prove Theorem \ref{main_teo} we will show some preliminary results. 
We start by the following Lemma, that states the existence and uniqueness of the constant $c_u$ appearing in \eqref{constant_cu}.

\begin{lemma}\label{lemma1}Let $\beta>0$, $m>0$ be fixed.
Let $v\in L^2(\de \Omega)$, and $m>0$ be fixed. Then there is a unique constant $c_v\geq 0$ such that 
\begin{align}\label{c_u}
 c_v=\Big(\frac{1}{|\{|v|\geq c_v\}|+m\beta}\Big)\int_{\{|v|\geq c_v\}}|v(\sigma)|d\mathcal H^{n-1}(\sigma).
\end{align}
Moreover $c_v=0$ if and only if $v=0$ $\mathcal H^{n-1}$-a.e. on $\de\Omega$.
\end{lemma}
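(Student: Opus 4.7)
The plan is to recast the fixed-point identity \eqref{c_u} as the zero set of a single monotone scalar function, and then invoke monotonicity plus continuity.

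First I would introduce, for $t\ge 0$, the function
\[
g(t):=\int_{\de\Omega}\big(|v(\sigma)|-t\big)^{+}d\mathcal H^{n-1}(\sigma)-tm\beta.
\]
Since $|v|-t\ge 0$ exactly on $\{|v|\ge t\}$, we have
\[
g(t)=\int_{\{|v|\ge t\}}|v|\,d\mathcal H^{n-1}-t\,|\{|v|\ge t\}|-tm\beta,
\]
so the identity \eqref{c_u} is equivalent to $g(c_v)=0$. Thus the statement reduces to showing that $g$ has a unique zero on $[0,+\infty)$.

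Next I would establish the two analytic properties of $g$. Continuity (indeed, local Lipschitz continuity) on $[0,+\infty)$ follows at once from the dominated convergence theorem applied to the pointwise Lipschitz family $t\mapsto(|v|-t)^{+}$, together with linearity of the $-tm\beta$ term. For strict monotonicity, observe that for $0\le s<t$ and a.e.\ $\sigma\in\de\Omega$ one has $(|v(\sigma)|-s)^{+}-(|v(\sigma)|-t)^{+}\le t-s$, with equality only where $|v(\sigma)|\ge t$, hence
\[
g(s)-g(t)=\int_{\de\Omega}\big[(|v|-s)^{+}-(|v|-t)^{+}\big]\,d\mathcal H^{n-1}+(t-s)m\beta\ge (t-s)m\beta>0,
\]
because $m\beta>0$. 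So $g$ is strictly decreasing.

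Finally I would determine the boundary values. At $t=0$ we have $g(0)=\int_{\de\Omega}|v|\,d\mathcal H^{n-1}\ge 0$, with equality if and only if $v=0$ $\mathcal H^{n-1}$-a.e.\ on $\de\Omega$. On the other hand $g(t)\le\int_{\de\Omega}|v|\,d\mathcal H^{n-1}-tm\beta\to-\infty$ as $t\to+\infty$. Combining the intermediate value theorem with the strict monotonicity from the previous step, $g$ has exactly one zero $c_v\ge 0$. If $v=0$ a.e.\ then $c_v=0$ is forced; if $v\not\equiv 0$ then $g(0)>0$ forces $c_v>0$, which completes the characterization and proves the lemma. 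There is no real obstacle here: the only thing to notice is the convenient rewriting in terms of $(|v|-t)^{+}$, which makes both continuity and strict monotonicity transparent.
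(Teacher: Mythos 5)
Your proof is correct and is essentially the paper's own argument: the paper introduces $g_1(c)=\int_{\{|v|\ge c\}}(|v|-c)\,d\mathcal H^{n-1}$ and $g_2(c)=m\beta c$ and locates their unique intersection, which is exactly your single function $g=g_1-g_2$ and its unique zero. Your version merely adds explicit detail (dominated convergence for continuity, the quantitative estimate $g(s)-g(t)\ge (t-s)m\beta$ for strict monotonicity), so there is nothing to change.
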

\begin{proof}
 Let us consider the two functions $g_1,g_2:[0,+\infty)\rightarrow[0,+\infty)$ defined by
 \begin{align*}
  &g_1:c\mapsto \int_{|v|\geq c}(|v|-c)d\mathcal H^{n-1}(\sigma),\\
  &g_2:c\mapsto m\beta c,
 \end{align*}
which happen to be continuous on $[0,+\infty)$. Moreover, $g_2$ is strictly increasing, $g_2(0)=0$, whereas $g_1$ is non-increasing and $g_1(0)=\int_{\de\Omega}|v|d\mathcal H^{n-1}(\sigma)\geq0$. Moreover $g_1(0)=0$ if and only if $v\equiv0$. Since $g_2(c)\rightarrow+\infty$ as $c\rightarrow+\infty$, it follows that there is a unique $c_v\geq0$ such that $g_1(c_v)=g_2(c_v)$, and $c_v=0$ if and only if $v=0$ a.e. on $\de\Omega$.
\end{proof}

\begin{prop}\label{h_exists} Let $\beta>0$, $m>0$ be fixed, let $v\in L^2(\de \Omega)$, and let $h\in L^2(\de\Omega)$ be the function defined by
 \begin{align}\label{h_min}
  h(\sigma):=\begin{cases}
              \frac{|v(\sigma)|}{c_v\beta}-\frac1\beta&\text{if }|v(\sigma)|\geq c_v,\\
              0&\text{otherwise},
             \end{cases}
 \end{align}
where $c_v$ is the constant given by Lemma \ref{lemma1}.
Then $h$ is the solution to the minimum problem
\begin{equation}\label{minimum}
 \min_{\hat h\in \mathcal H_m(\partial \Omega)}\frac\beta2\int_{\de\Omega}\frac{v^2}{1+\beta \hat h}d\mathcal H^{n-1}(\sigma).
\end{equation}
\end{prop}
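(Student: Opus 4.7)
The plan is a two-step argument: first verify that $h$ in \eqref{h_min} lies in $\mathcal H_m(\partial\Omega)$, and then prove optimality via the strict convexity of the map $t\mapsto 1/(1+\beta t)$.

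First I would check admissibility by a direct computation. Integrating \eqref{h_min}, we get
$$
\int_{\partial\Omega} h\, d\mathcal H^{n-1}(\sigma) = \frac{1}{\beta c_v}\int_{\{|v|\geq c_v\}}|v|\,d\mathcal H^{n-1}(\sigma) - \frac{1}{\beta}\bigl|\{|v|\geq c_v\}\bigr|,
$$
and substituting the relation \eqref{c_u} defining $c_v$ (which gives $\int_{\{|v|\geq c_v\}}|v|\,d\mathcal H^{n-1} = c_v(|\{|v|\geq c_v\}|+m\beta)$) yields exactly $m$. The degenerate case $v\equiv 0$ on $\partial\Omega$ is trivial since the functional vanishes identically, so any admissible $\hat h$ is a minimizer.

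Second, I would exploit the pointwise tangent inequality for the strictly convex function $\phi(t) = 1/(1+\beta t)$ on $(-1/\beta,+\infty)$:
$$
\frac{1}{1+\beta \hat h} \,\geq\, \frac{1}{1+\beta h} \,-\, \frac{\beta(\hat h - h)}{(1+\beta h)^2},
$$
valid for any $\hat h\in\mathcal H_m(\partial\Omega)$. Multiplying by $\beta v^2/2$ and integrating over $\partial\Omega$ reduces the optimality statement to showing that
$$
I := \int_{\partial\Omega}\frac{v^2(\hat h - h)}{(1+\beta h)^2}\,d\mathcal H^{n-1}(\sigma) \,\leq\, 0.
$$
At this point the specific form \eqref{h_min} of $h$ enters decisively: on $\{|v|\geq c_v\}$ we have $1+\beta h = |v|/c_v$, so the density $v^2/(1+\beta h)^2$ equals the constant $c_v^2$; on $\{|v|<c_v\}$ we have $h=0$, so the density equals $v^2 < c_v^2$.

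Finally, using $\int\hat h = \int h = m$ together with $h=0$ on $\{|v|<c_v\}$, we find $\int_{\{|v|\geq c_v\}}(\hat h-h)\,d\mathcal H^{n-1} = -\int_{\{|v|<c_v\}}\hat h\,d\mathcal H^{n-1}$, hence
$$
I = c_v^2\int_{\{|v|\geq c_v\}}(\hat h - h)\,d\mathcal H^{n-1}(\sigma) + \int_{\{|v|<c_v\}} v^2\hat h\,d\mathcal H^{n-1}(\sigma) = \int_{\{|v|<c_v\}}(v^2 - c_v^2)\hat h\,d\mathcal H^{n-1}(\sigma) \leq 0,
$$
since $v^2\leq c_v^2$ and $\hat h\geq 0$ there. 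This proves $h$ minimizes \eqref{minimum}. I expect no serious obstacle; the whole argument is an explicit KKT computation where the Lagrange multiplier is $c_v^2$, corresponding to the common value of $v^2/(1+\beta h)^2$ on the active set $\{h>0\}=\{|v|\geq c_v\}$. Uniqueness, if desired, follows from the strict convexity of $\phi$, applied where $v\neq 0$.
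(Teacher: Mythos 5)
Your proof is correct and follows essentially the same route as the paper: both arguments rest on the convexity of $t\mapsto 1/(1+\beta t)$ and reduce optimality to the identical final computation $\int_{\{|v|<c_v\}}(v^2-c_v^2)\hat h\,d\mathcal H^{n-1}\le 0$, using that $v^2/(1+\beta h)^2$ equals the multiplier $c_v^2$ on $\{h>0\}$ and is smaller elsewhere, together with the equal-mass constraint. Your tangent-line inequality at $h$ is a slight streamlining of the paper's version, which instead shows that $\psi(t)=\int_{\de\Omega}v^2/(1+\beta(\hat h+t(h-\hat h)))\,d\mathcal H^{n-1}$ has nonpositive derivative along the whole segment.
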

{
\begin{rem}
It is easy to see that the function $h$ defined in \eqref{h_min} belongs to $\mathcal H_m(\partial \Omega)$. 
\end{rem}}

\begin{proof}
Let $\hat h\in L^2(\de\Omega)\cap\mathcal H_m(\partial\Omega)$ be a competitor for the problem \eqref{minimum}. We aim to prove that 
\begin{align}\label{claim_ineq}
 \int_{\de\Omega}\frac{v^2}{1+\beta \hat h}d\mathcal H^{n-1}(\sigma)\geq\int_{\de\Omega}\frac{v^2}{1+\beta h}d\mathcal H^{n-1}(\sigma).
\end{align}
We set, for $t\in[0,1]$,
\begin{equation*}
 \psi(t):=\int_{\de\Omega}\frac{v^2}{1+\beta (\hat h+t(h-\hat h))}d\mathcal H^{n-1}(\sigma),
\end{equation*}
in such a way that \eqref{claim_ineq} is equivalent to proving that  $\psi(0)\geq \psi(1)$. To this aim it suffices to check that $\psi'(t)\leq0$ for any $t\in[0,1]$.
Denoting by $A$ and $B$ the subsets of $\de\Omega$ defined as
$A:=\{\sigma:\hat h(\sigma)>h(\sigma)\}$, and $B:=\{\sigma:\hat h(\sigma)<h(\sigma)\}$ we have, for $t\in[0,1]$,
\begin{align*}
 &1+\beta (\hat h+t(h-\hat h))>1+\beta h,\quad \text{on } A,\\
 &1+\beta (\hat h+t(h-\hat h))<1+\beta h,\quad \text{on } B,
\end{align*}
hence it turns out
\begin{align*}
 \psi'(t)&=-\beta\int_{\de\Omega}\frac{v^2(h-\hat h)}{[1+\beta (\hat h+t(h-\hat h))]^2}d\mathcal H^{n-1}(\sigma)\\
 &=-\beta\int_{A}\frac{v^2(h-\hat h)}{[1+\beta (\hat h+t(h-\hat h))]^2}d\mathcal H^{n-1}(\sigma)-\beta\int_{B}\frac{v^2(h-\hat h)}{[1+\beta (\hat h+t(h-\hat h))]^2}d\mathcal H^{n-1}(\sigma)\\
 &\leq -\beta\int_{A}\frac{v^2(h-\hat h)}{(1+\beta h)^2}d\mathcal H^{n-1}(\sigma)-\beta\int_{B}\frac{v^2(h-\hat h)}{(1+\beta h)^2}d\mathcal H^{n-1}(\sigma)\\
 &=-\beta\int_{\{|v|\geq c_v\}}c_v^2(h-\hat h)d\mathcal H^{n-1}(\sigma)-\beta\int_{\{|v|< c_v\}}v^2(h-\hat h)d\mathcal H^{n-1}(\sigma).
\end{align*}
In the last equality we have used the explicit expression of $h$ in \eqref{h_min}.
Exploiting also that $h=0$ on $\{|v|< c_v\}$, we infer that the last line in the preceding expression equals
\begin{align*}
 &-\beta\int_{\{|v|\geq c_v\}}c_v^2(h-\hat h)\;d\mathcal H^{n-1}(\sigma)+\beta\int_{\{|v|< c_v\}}v^2\hat h\;d\mathcal H^{n-1}(\sigma)\\
 &\leq -\beta\int_{\{|v|\geq c_v\}}c_v^2(h-\hat h)\;d\mathcal H^{n-1}(\sigma)+\beta\int_{\{|v|< c_v\}}c_v^2\hat h\;d\mathcal H^{n-1}(\sigma)\\
&=-\beta c_v^2\int_{\{|v|\geq c_v\}}h\;d\mathcal H^{n-1}(\sigma)+\beta c_v^2\int_{\{|v|< c_v\}}\hat h\;d\mathcal H^{n-1}(\sigma)=0.
\end{align*}
{
In the last equality we have used the explicit expression \eqref{h_min} and the fact that  both $h$ and $\hat h$ have the same mass $m$. 
}
Thus $\psi'(t)\leq0$ for all $t\in[0,1]$, and the thesis is achieved.

\end{proof}
In order to minimize $F$ with respect to $v$ and $h$, we need the following coerciveness result.

\begin{prop}\label{coer}
Let  $\beta>0$ and $m>0$ be fixed.
 There are positive constants $C_1,C_2$ such that for any $v\in H^1(\Omega)$, $h\in L^2(\de\Omega)\cap \mathcal H_m(\partial \Omega)$,
 \begin{align}\label{coerciveness}
  F(v,h)\geq C_1\|v\|^2_{H^1(\Omega)}-C_2.
 \end{align}
\end{prop}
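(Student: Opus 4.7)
The strategy is to exploit the mass constraint $\int_{\de\Omega}h\,d\mathcal H^{n-1}=m$ in order to produce a lower bound on the boundary term of $F$ that is \emph{uniform} in $h\in\mathcal H_m(\partial\Omega)$, and then to couple it with a trace-type Poincar\'e inequality so as to control the full $H^1$-norm of $v$. The key point is that, although the weight $1/(1+\beta h)$ can degenerate where $h$ is large, the fixed mass of $h$ prevents this from happening on too large a set.

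\emph{Step 1 (uniform lower bound on the boundary term).} Applying Cauchy--Schwarz to the factorization $|v|=(|v|/\sqrt{1+\beta h})\cdot\sqrt{1+\beta h}$ on $\de\Omega$ yields
\[
\Big(\int_{\de\Omega}|v|\,d\mathcal H^{n-1}\Big)^{2}\le \Big(\int_{\de\Omega}\frac{v^{2}}{1+\beta h}\,d\mathcal H^{n-1}\Big)\Big(\int_{\de\Omega}(1+\beta h)\,d\mathcal H^{n-1}\Big),
\]
and since $\int_{\de\Omega}(1+\beta h)\,d\mathcal H^{n-1}=|\de\Omega|+\beta m$, this gives
\[
\int_{\de\Omega}\frac{v^{2}}{1+\beta h}\,d\mathcal H^{n-1}\ge \frac{1}{|\de\Omega|+\beta m}\,\|v\|_{L^{1}(\de\Omega)}^{2},
\]
a bound that is independent of $h\in\mathcal H_m(\partial\Omega)$.

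\emph{Step 2 (trace-Poincar\'e inequality).} Next I would establish that, for some $C=C(\Omega)>0$,
\[
\|v\|_{H^{1}(\Omega)}^{2}\le C\big(\|\nabla v\|_{L^{2}(\Omega)}^{2}+\|v\|_{L^{1}(\de\Omega)}^{2}\big)\qquad \forall v\in H^{1}(\Omega).
\]
This is the one non-trivial input and I would prove it by contradiction and compactness: if it failed, one could find a sequence $v_n$ with $\|v_n\|_{H^{1}}=1$, $\|\nabla v_n\|_{L^{2}}\to 0$ and $\|v_n\|_{L^{1}(\de\Omega)}\to 0$; weak $H^{1}$ compactness would give a constant weak limit $v$, the compact trace embedding $H^{1}(\Omega)\hookrightarrow L^{1}(\de\Omega)$ would force that constant to be $0$, and Rellich's theorem would then produce $v_n\to 0$ strongly in $L^{2}(\Omega)$, contradicting $\|v_n\|_{H^{1}}=1$.

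\emph{Step 3 (conclusion).} Combining Steps 1 and 2, one obtains
\[
\frac{1}{2}\int_{\Omega}|\nabla v|^{2}\,dx+\frac{1}{2}\int_{\de\Omega}\frac{v^{2}}{1+\beta h}\,d\mathcal H^{n-1}\ge c_{0}\|v\|_{H^{1}(\Omega)}^{2}
\]
for some $c_{0}=c_{0}(\Omega,\beta,m)>0$ independent of $h$. The forcing term is treated by Cauchy--Schwarz and Young's inequality,
\[
\int_{\Omega}fv\,dx\le \|f\|_{L^{2}(\Omega)}\|v\|_{H^{1}(\Omega)}\le \frac{c_{0}}{2}\|v\|_{H^{1}(\Omega)}^{2}+\frac{1}{2c_{0}}\|f\|_{L^{2}(\Omega)}^{2},
\]
and absorption of the first summand on the right into the left-hand side yields \eqref{coerciveness} with $C_{1}=c_{0}/2$ and $C_{2}=\|f\|_{L^{2}(\Omega)}^{2}/(2c_{0})$. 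The only conceptual subtlety is the trace-Poincar\'e inequality of Step 2, but for the $C^{1,1}$ domain $\Omega$ this is a routine consequence of the compactness of the trace operator; everything else is direct computation.
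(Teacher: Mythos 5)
Your proof is correct, and its key step is genuinely different from the paper's. For the uniform-in-$h$ lower bound on the boundary term, the paper first replaces $h$ by the optimal profile $h_v$ of Proposition \ref{h_exists} and then computes $\int_{\de\Omega}\frac{v^2}{1+\beta h_v}\,d\mathcal H^{n-1}$ explicitly on the two sets $\{|v|<c_v\}$ and $\{|v|\ge c_v\}$, using the implicit characterization \eqref{c_u} of $c_v$ to arrive at $\big(\int_{\de\Omega}|v|\,d\mathcal H^{n-1}\big)^2/(Per(\Omega)+m\beta)$ up to constants. Your Step 1 reaches the same conclusion in one line by the weighted Cauchy--Schwarz inequality applied directly to an arbitrary $h\in\mathcal H_m(\de\Omega)$, using only $\int_{\de\Omega}(1+\beta h)\,d\mathcal H^{n-1}=\mathcal H^{n-1}(\de\Omega)+\beta m$; this is more elementary and makes the coercivity estimate independent of Lemma \ref{lemma1} and Proposition \ref{h_exists}, whereas the paper's route reuses machinery it needs anyway for Theorem \ref{main_teo}. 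From that point on the two arguments coincide: the paper invokes the ``classical trace inequality'' which is exactly your Step 2 inequality $\|v\|_{H^1(\Omega)}^2\le C\big(\|\nabla v\|_{L^2(\Omega)}^2+\|v\|_{L^1(\de\Omega)}^2\big)$ (your compactness proof of it is the standard one and is fine for a $C^{1,1}$ domain), and the linear term is absorbed by Young's inequality in both proofs.
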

\begin{proof}
 The term $\int_{\de\Omega}\frac{v^2}{1+\beta h}d\mathcal H^{n-1}(\sigma)$ is obviously greater or equal to $\int_{\de\Omega}\frac{v^2}{1+\beta h_v}d\mathcal H^{n-1}(\sigma)$, where $h_v$ is the function given in \eqref{h_min}. Now, recalling \eqref{c_u}, we estimate
 \begin{align*}
  \int_{\de\Omega}\frac{v^2}{1+\beta h_v}d\mathcal H^{n-1}(\sigma)&=\int_{\{|v|<c_v\}}\frac{v^2}{1+\beta h_v}d\mathcal H^{n-1}(\sigma)+\int_{\{|v|\geq c_v\}}\frac{v^2}{1+\beta h_v}d\mathcal H^{n-1}(\sigma)\\
  &=\int_{\{|v|<c_v\}}v^2\;d\mathcal H^{n-1}(\sigma)+\int_{\{|v|\geq c_v\}}c_v|v|\;d\mathcal H^{n-1}(\sigma)\\
  &=\int_{\{|v|<c_v\}}v^2\;d\mathcal H^{n-1}(\sigma)+\frac{\big(\int_{\{|v|\geq c_v\}}|v|\;d\mathcal H^{n-1}(\sigma)\big)^2}{|\{|v|\geq c_v\}|+m\beta}\\
  &\geq\frac{\big(\int_{\{|v|< c_v\}}|v|\;d\mathcal H^{n-1}(\sigma)\big)^2}{|\partial\Omega|}+\frac{\big(\int_{\{|v|\geq c_v\}}|v|\;d\mathcal H^{n-1}(\sigma)\big)^2}{Per(\Omega)+m\beta}\\
  &\geq \frac{\big(\int_{\de\Omega}|v|\;d\mathcal H^{n-1}(\sigma)\big)^2}{Per(\Omega)+m\beta}.
 \end{align*}
Thanks to classical trace inequality we thus conclude
\begin{align*}
&\frac{1}{2}\int_{\Omega} |\nabla v|^{2} dx +\frac{1}{2}\int_{\de\Omega}\frac{v^{2}}{1+\beta h}d\mathcal H^{n-1}(\sigma)\geq \frac{1}{2}\int_{\Omega} |\nabla v|^{2} dx +C\big(\int_{\de\Omega}|v|\;d\mathcal H^{n-1}(\sigma)\big)^2\\
&\geq C\|v\|_{H^1(\Omega)}^2,
\end{align*}
and we then obtain the thesis by standard arguments to estimate the linear term in the energy.
\end{proof}

Concerning uniqueness, we will need the following Proposition.

\begin{prop}
\label{uniqueness_prop}
{ Assume that the  set $\Omega$ is connected and let $\beta>0$, $m>0$ be fixed.} Then the  functional $F(v,h)$ satisfies the following convexity condition in $ H^{1}(\Omega)\times \mathcal H_{m}(\de\Omega)$, namely
\[
\frac{1}{2} \left[ F(v_{1},h_{1}) +F(v_{2},h_{2}) \right] > F\left(\frac{v_{1}+v_{2}}{2},\frac{h_{1}+h_{2}}{2}\right)\quad \forall (v_{1},h_{1})\neq (v_{2},h_{2}).
\]
\end{prop}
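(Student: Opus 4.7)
The plan is to split $F=F_{1}+F_{2}+F_{3}$ into its Dirichlet, boundary, and linear pieces $F_{1}(v)=\tfrac12\int_{\Omega}|\nabla v|^{2}\,dx$, $F_{2}(v,h)=\tfrac12\int_{\de\Omega}\frac{v^{2}}{1+\beta h}\,d\mathcal H^{n-1}(\sigma)$, and $F_{3}(v)=-\int_{\Omega}fv\,dx$, and to analyse strict convexity of each piece separately. Since $F_{3}$ is affine, it contributes nothing to strict convexity. For $F_{1}$, the strict convexity of $\xi\mapsto|\xi|^{2}$ yields that equality in the midpoint inequality forces $\nabla(v_{1}-v_{2})\equiv 0$ a.e.\ in $\Omega$; since $\Omega$ is connected, this implies $v_{1}-v_{2}=c$ a.e.\ for some constant $c\in\R$.

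The key step is the boundary term $F_{2}$. My plan is to observe that the integrand $\phi(v,h)=v^{2}/(1+\beta h)$ is the perspective of the strictly convex function $g(x)=x^{2}$ with respect to the positive variable $t=1+\beta h$, namely $\phi(v,h)=t\,g(v/t)$, hence convex on $\R\times[0,+\infty)$. A direct Hessian computation gives $\det D^{2}\phi\equiv 0$, with null direction $(\xi_{v},\xi_{h})\propto(\beta v,\,1+\beta h)$, along which $\phi$ is affine (reflecting the $1$-homogeneity $\phi(\lambda v,\lambda t)=\lambda\phi(v,t)$). Consequently, the midpoint inequality for $F_{2}$ is strict unless for $\mathcal H^{n-1}$-a.e.\ $\sigma\in\de\Omega$ one has
\[
\frac{v_{1}(\sigma)}{1+\beta h_{1}(\sigma)}=\frac{v_{2}(\sigma)}{1+\beta h_{2}(\sigma)}=:k(\sigma),
\]
equivalently $v_{i}=k(1+\beta h_{i})$ on $\de\Omega$.

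To combine these, I would suppose equality in the midpoint inequality for $F$; passing both conditions to the trace on $\de\Omega$ yields $c=v_{1}-v_{2}=k\beta(h_{1}-h_{2})$ a.e.\ on $\de\Omega$. If $c\neq 0$, then $k\neq 0$ a.e., so $h_{1}-h_{2}=c/(k\beta)$, and the mass constraint $\int_{\de\Omega}(h_{1}-h_{2})\,d\mathcal H^{n-1}=0$ becomes $\int_{\de\Omega}(1+\beta h_{1})/v_{1}\,d\mathcal H^{n-1}=0$, which has to be ruled out by exploiting $1+\beta h_{1}\ge 1$ and applying the mass identity simultaneously to the symmetric relation obtained by exchanging indices $1\leftrightarrow 2$. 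If $c=0$, then $k\beta(h_{1}-h_{2})=0$ a.e., hence $h_{1}=h_{2}$ on $\{v_{1}\neq 0\}$, and together with $\int h_{1}=\int h_{2}=m$ this yields $h_{1}=h_{2}$ a.e.\ on $\de\Omega$. The main obstacle I anticipate is this final combinatorial step in the case $c\neq 0$ with $v_{1}$ of mixed sign, where the pointwise relation $v_{i}=k(1+\beta h_{i})$ must be combined with the $L^{1}$ mass identity through a careful integration against a suitable weight to reach a contradiction; conceptually this is where the strictness of the convex combination of $\phi$ has to be promoted from pointwise information on $\de\Omega$ to a genuine integral-level rigidity.
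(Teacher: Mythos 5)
Your decomposition and the rigidity analysis of each term are essentially the paper's own proof: there too, equality forces $\nabla v_1=\nabla v_2$ (hence $v_1-v_2=c$ by connectedness) together with the pointwise proportionality $(1+\beta h_2)v_1=(1+\beta h_1)v_2$ on $\de\Omega$, which the paper extracts by an explicit algebraic identity plus AM--GM rather than by quoting the equality case of the perspective function; your formulation of that step is cleaner but equivalent. The divergence, and the genuine gap, is in how the two rigidity conditions are combined with the mass constraint. In the branch $c\neq 0$ you correctly reduce matters to contradicting $\int_{\de\Omega}(h_1-h_2)\,d\mathcal H^{n-1}=\frac{c}{\beta}\int_{\de\Omega}\frac{1}{k}\,d\mathcal H^{n-1}=0$, but you do not close the argument, and the obstruction you sense for sign-changing traces is real: without a sign condition the statement itself fails. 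Indeed for $v_1=v_2=0$ and $h_1\neq h_2$ every term of $F$ vanishes and the claimed strict inequality is an equality; sign-changing examples with $c\neq 0$ can be built similarly by prescribing $k$ of both signs with $\int 1/k=0$. The paper's proof tacitly uses positivity at exactly this point: it writes $\beta h_1=(1+\beta h_2)\frac{v_1}{v_2}-1<\beta h_2$, which requires $v_2>0$ in order to deduce $v_1/v_2<1$ from $v_1<v_2$.

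The proposition is only ever applied to minimizers, which satisfy $u\geq K>0$ on $\bar\Omega$, so the version you should prove carries the extra hypothesis that $v_1,v_2$ have (strictly) positive traces. Under that hypothesis both of your loose ends close at once, and no integration against a clever weight is needed: since $k=v_i/(1+\beta h_i)$ with $1+\beta h_i\geq 1$, the set $\{k=0\}=\{v_1=0\}\cap\{v_2=0\}$ is incompatible with $c\neq 0$, so $k>0$ a.e., hence $h_1-h_2=c/(k\beta)$ has the strict sign of $c$ almost everywhere and integrating contradicts $\int h_1=\int h_2=m$. Your branch $c=0$ also contains a false step as written: knowing $h_1=h_2$ on $\{v_1\neq 0\}$ together with equal total masses does \emph{not} give $h_1=h_2$ on $\{v_1=0\}$, since $h$ can be redistributed there with zero net mass --- and $F$ is blind to $h$ on that set, which is precisely why strict convexity genuinely fails without positivity. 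With positive traces $\{v_1=0\}$ is negligible and the conclusion follows.
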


\begin{proof}
Let $v_{1},v_{2}$ be two functions in $H^{1}(\Omega)$. Then by Cauchy-Schwarz inequality
\begin{equation}
\label{equality0}
\int_{\Omega}\left(\frac{\left|\nabla v_{1}\right|^{2}}{2}+\frac{\left|\nabla v_{2}\right|^{2}}{2}\right)dx \ge \int_{\Omega} \left|\frac{\nabla (v_{1}+v_{2})}{2}\right|^{2}dx,
\end{equation}
and the equality holds if and only if $v_{2}=v_{1}+k$ (here is where we use that $\Omega$ is connected).
Moreover, 
\begin{multline} 
\label{eqpass}
\frac{1}{2}  \left(\frac{v_{1}^{2}}{1+\beta h_{1}}+\frac{v_{2}^{2}}{1+\beta h_{2}}\right)=\\[.2cm]=
\frac{v_{1}^{2}+v_{2}^{2}}{4+2\beta(h_{1}+h_{2})} +\left[\left(\frac{1+\beta h_{2}}{1+\beta h_{1}}\right)\frac{v_{1}^{2}}{2}+\left(\frac{1+\beta h_{1}}{1+\beta h_{2}}\right)\frac{v_{2}^{2}}{2}\right]\frac{1}{2+\beta(h_{1}+h_{2})} 
\end{multline}
By Cauchy-Schwarz inequality, it holds that
\[
\left(\frac{1+\beta h_{2}}{1+\beta h_{1}}\right)\frac{v_{1}^{2}}{2}+\left(\frac{1+\beta h_{1}}{1+\beta h_{2}}\right)\frac{v_{2}^{2}}{2} \ge v_{1}v_{2},
\]
with equality if and only if 
\begin{equation}
\label{equalitypass}
(1+\beta h_{2}) v_{1}= v_{2}(1+\beta h_{1}).
\end{equation}

Hence, equality in both \eqref{equality0} and \eqref{equalitypass} can hold if and only if $v_{2}=v_{1}$ (and $h_{2}=h_{1}$): indeed, equality in \eqref{equality0} gives $v_{2}=v_{1}+ k$ with $ k\ge 0$, and if \eqref{equalitypass} holds with $k>0$, then
\[
\beta h_{1}= (1+\beta h_{2})\frac{v_{1}}{v_{2}}-1 < \beta h_{2};
\]
so integrating on $\de\Omega$ we immediately get a contradiction. Then from \eqref{eqpass} we obtain that for $v_{2}=v_{1}+k$, with $k>0$,
\[
\frac{1}{2}  \left(\frac{v_{1}^{2}}{1+\beta h_{1}}+\frac{v_{2}^{2}}{1+\beta h_{2}}\right) > \frac{(\frac{v_{1}+v_{2}}{2})^{2}}{1+\beta \frac{h_{1}+h_{2}}{2}}.
\]
\end{proof}


We are now in position to conclude the proof of Theorem \ref{main_teo}.

\begin{proof}[Proof of Theorem \ref{main_teo}]
Let $(u_n,h_n)\in H^1(\Omega)\times \mathcal H_m(\partial \Omega)$ be a minimizing sequence; we can always assume $h_n$ smooth, and $u_n$ be the solution to
\begin{align}\label{PDEu_n}
 \begin{cases}
-\Delta u_n=f&\hbox{in }\Omega\\[.2cm]
(1+\beta h_n)\ds\frac{\de u_n }{\de \nu}+\beta u_n =0&\hbox{on }\partial\Omega.
\end{cases}
\end{align}

Fixed $h_n$, we can indeed consider the auxiliary minimum problem
\begin{align}
 \min_{v\in H^1(\Omega)} F(v,h_n),
\end{align}
whose solution, denoted by $\bar u_n$, exists thanks to the coerciveness condition  \eqref{coerciveness} and solves \eqref{PDEu_n}. Hence, without loss of generality we assume $u_n=\bar u_n$.
Now we observe that $u_n\ge  0$ on $\Omega$. Indeed, assume by contradiction that $\min u_n<0$. By the maximum principle, since $f\ge 0$, $u_n$ achieves its minimum on $\partial\Omega$. Let $\sigma\in\de\Omega$ be a point where the minimum is achieved, then by the second condition in \eqref{PDEu_n} it results
\begin{align*}
 (1+\beta h_n(\sigma))\ds\frac{\de  u_n(\sigma) }{\de \nu}=-\beta  u_n(\sigma)>0,
\end{align*}
which contradicts the maximum principle. 

More precisely, we claim that  there is a constant $K>0$ such that 
\begin{equation}
\label{boundbasso}
u_n(x)\ge U(x)\ge K>0\text{ on }\bar\Omega,
\end{equation}
where $U$ solves
\begin{equation*}\label{PDEU} 
\begin{cases}
-\Delta  U=f&\hbox{in }\Omega\\[.2cm]
\ds\frac{\de  U }{\de \nu}+\beta  U =0& \hbox{on }\partial\Omega.
\end{cases}
\end{equation*}
By the strong maximum principle $U(x)\geq K>0$, so that to prove the claim we have to show that $u_n(x)\geq U(x)$ on $\bar \Omega$. 
The function $w:=u_{n}-U$ satisfies
\begin{equation*}\label{PDEw}
\begin{cases}
\Delta  w=0&\hbox{in }\Omega\\[.2cm]
 \ds\frac{\de  w }{\de \nu}+\beta  w  \ge 0& \hbox{on }\partial\Omega.
\end{cases}
\end{equation*}
Then, using again the maximum principle we find that $w$ has its minimum on $\de \Omega$, and it cannot be negative. The claim \eqref{boundbasso} follows.

Thanks to \eqref{boundbasso}, we now claim that the sequence $c_{u_{n}}$ cannot vanish as $n\to +\infty$. Here $c_{u_n}$ is the constant in \eqref{c_u} associated to $u_n$. 
To prove this claim, let us observe that by Fatou's Lemma
\[
\liminf_{n\to +\infty} c_{u_{n}} \ge \frac{1}{Per(\Omega)+m\beta} \int_{\de \Omega} \liminf_{n\to +\infty} u_{n} \chi_{\{u_{n} \ge c_{u_{n}}\}} d\sigma,
\]
hence if $c_{u_{n}}\to 0$, being $u_{n}\ge K>0$ we would have $\{u_{n} \ge c_{u_{n}}\}=\de \Omega$ for $n$ sufficiently large and finally
\[
0\ge \frac{K Per(\Omega)}{Per(\Omega)+m\beta}
\]
which is a contradiction.

By Proposition \ref{coer} the sequence $u_{n}$ weakly converges (up to take a subsequence) to a function $u\in H^{1}(\Omega)$. We then observe that 
Proposition \ref{h_exists} allows us to define
\begin{align}
 \bar h_n(\sigma):=\begin{cases}
              \frac{|u_n(\sigma)|}{c_n\beta}-\frac1\beta&\text{if }|u_n(\sigma)|\geq c_n\\
              0&\text{otherwise },
             \end{cases} 
\end{align}
and $(u_n,\bar h_n)$ is still a minimizing sequence. Since $\bar h_{n}$ converges to some function $h\in L^{2}(\de\Omega)\cap \mathcal H_{m}(\partial \Omega)$, it turns out that the couple $(u,h)$ is a minimum of \eqref{minimum_couple}.

The uniqueness follows by the strict convexity of the functional as proved in Proposition \ref{uniqueness_prop}.
\end{proof}

\section{A sharp estimate for the heat content}\label{sec:5}

In this section we deal with Open Problem \ref{op1}. We will denote by $|\Omega|$ the Lebesgue measure of $\Omega$ and by $Per(\Omega)$ its perimeter. 
{
The main result  is the following Theorem.
\begin{theorem}
Let $m>0$ be fixed, $f\equiv 1$, and $(u,h)\in H^1(\Omega)\times L^2(\partial \Omega)$ be the minimizing couple as in Theorem \ref{main_teo}. Then the total heat content satisfies
\begin{equation}\label{isop_estimate}
\int_\Omega u\le \frac{1}{\omega_n^{2/n}n^2}\left(\frac{n|\Omega|^{\frac{2}{n}+1}}{n+2}+|\Omega|^\frac{2}{n} \left(\frac{Per(\Omega)}{\beta}+m\right)\right),
\end{equation}
were $\omega_n$ is the measure of the unit ball in $\R^n$. Equality is achieved when $\Omega$ is a ball and $h$ is constant.
\end{theorem}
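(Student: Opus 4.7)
The plan is to exploit the duality between the minimization defining $(u,h)$ and Schwarz symmetrization, then invoke the classical isoperimetric inequality. By Theorem \ref{main_teo}, $(u,h)$ minimizes $F$; testing \eqref{limiting_pb} with $u$ (and $f\equiv 1$) gives $\int_\Omega|\nabla u|^2 + \beta\int_{\de\Omega}u^2/(1+\beta h)\,\h = \int_\Omega u$, so $F(u,h)=-\frac{1}{2}\int_\Omega u$, whence
\[
\int_\Omega u \;=\; \sup_{(v,k)\in H^1(\Omega)\times \mathcal{H}_m(\de\Omega)}\left[\,2\int_\Omega v\,dx - \int_\Omega|\nabla v|^2\,dx - \beta\int_{\de\Omega}\frac{v^2}{1+\beta k}\,\h\right].
\]
The goal thus reduces to bounding this supremand, uniformly in $(v,k)$, by the right-hand side of \eqref{isop_estimate}.

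Let $B^*$ be the open ball centred at the origin with $|B^*|=|\Omega|$. Given a competitor $(v,k)$, I may assume $v\ge 0$ (replacing $v$ by $v_+$ only increases the supremand). Let $v^\#$ denote the Schwarz symmetric decreasing rearrangement of $v$ on $B^*$, and set $\tilde k\equiv m/Per(B^*)$ on $\de B^*$. Standard rearrangement results give $\int v^\# = \int v$, the P\'olya--Szeg\H{o} inequality $\int|\nabla v^\#|^2 \le \int|\nabla v|^2$, and the trace $v^\#|_{\de B^*}=\inf_\Omega v$. For the boundary term, Cauchy--Schwarz on $\de\Omega$ gives
\[
\int_{\de\Omega}\frac{v^2}{1+\beta k}\,\h \;\ge\; \frac{\big(\int_{\de\Omega} v\,\h\big)^2}{Per(\Omega)+\beta m} \;\ge\; (\inf_\Omega v)^2\,\frac{Per(\Omega)^2}{Per(\Omega)+\beta m},
\]
and since $p\mapsto p^2/(p+\beta m)$ is strictly increasing on $(0,\infty)$ while $Per(\Omega)\ge Per(B^*)$ by the isoperimetric inequality, the right-hand side dominates $(\inf_\Omega v)^2 Per(B^*)^2/(Per(B^*)+\beta m) = \int_{\de B^*}(v^\#)^2/(1+\beta \tilde k)\,\h$. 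Assembling these estimates,
\[
2\int_\Omega v - \int_\Omega|\nabla v|^2 - \beta\int_{\de\Omega}\frac{v^2}{1+\beta k}\,\h \;\le\; 2\int_{B^*} v^\# - \int_{B^*}|\nabla v^\#|^2 - \beta\int_{\de B^*}\frac{(v^\#)^2}{1+\beta\tilde k}\,\h,
\]
so the supremum on $\Omega$ is majorised by the analogous supremum on $B^*$, which by Theorem \ref{main_teo} applied to $B^*$ equals $\int_{B^*}U$ for the (necessarily radial) minimizing pair $(U,h^*)$, $h^*\equiv m/Per(B^*)$.

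A direct radial computation (solving $-U''-\frac{n-1}{r}U'=1$ together with $(1+\beta h^*)U'(R)+\beta U(R)=0$ at $r=R=(|\Omega|/\omega_n)^{1/n}$) yields
\[
\int_{B^*}U \;=\; \frac{1}{n^2\omega_n^{2/n}}\!\left[\frac{n|\Omega|^{\frac{2}{n}+1}}{n+2} + |\Omega|^{\frac{2}{n}}\!\left(\frac{Per(B^*)}{\beta}+m\right)\right];
\]
invoking $Per(B^*)\le Per(\Omega)$ once more yields \eqref{isop_estimate}. Equality in \eqref{isop_estimate} forces every step to be saturated: P\'olya--Szeg\H{o} forces $u$ to be radial, Cauchy--Schwarz forces $u$ to be constant and proportional to $1+\beta h$ on $\de\Omega$, and the isoperimetric step forces $\Omega$ to be a ball; together these characterise exactly the ball with constant $h$.

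The main obstacle is the boundary estimate: Schwarz rearrangement collapses the trace of $v$ to the single constant $\inf_\Omega v$ on $\de B^*$, so no naive pointwise comparison is available. The specific combination of Cauchy--Schwarz (which converts the weighted Robin-type integral into a ratio involving $Per(\Omega)^2$), the monotonicity of $p\mapsto p^2/(p+\beta m)$, and the isoperimetric inequality is what makes the rearrangement compatible with the $h$-dependent Robin boundary condition.
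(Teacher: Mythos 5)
Your reduction $\int_\Omega u=-2\min F=\sup_{(v,k)}\bigl[2\int_\Omega v-\int_\Omega|\nabla v|^2-\beta\int_{\partial\Omega}v^2/(1+\beta k)\,d\mathcal H^{n-1}\bigr]$ is correct, your boundary-term comparison (Cauchy--Schwarz plus the monotonicity of $p\mapsto p^2/(p+\beta m)$ plus the isoperimetric inequality) is correct, and your radial computation on the ball does reproduce the right-hand side of \eqref{isop_estimate}. But the proof has a fatal gap at its central step: the P\'olya--Szeg\H{o} inequality $\int_{B^*}|\nabla v^\#|^2\le\int_\Omega|\nabla v|^2$ is \emph{false} for functions $v\in H^1(\Omega)$ that do not vanish on $\partial\Omega$, and here the relevant competitors (in particular $u$ itself) are bounded below by a positive constant. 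The standard proof of P\'olya--Szeg\H{o} needs $\mathrm{Per}(\{v>t\};\Omega)\ge n\omega_n^{1/n}\mu(t)^{(n-1)/n}$, i.e.\ the isoperimetric inequality for the perimeter of the superlevel set \emph{relative to} $\Omega$; when $\{v>t\}$ touches $\partial\Omega$ this fails. A one-dimensional counterexample already kills the inequality: for $\Omega=(0,1)$ and $v(x)=x$ one has $\int_\Omega|v'|^2=1$ while $v^\#(y)=1-2|y|$ on $B^*=(-\tfrac12,\tfrac12)$ gives $\int_{B^*}|(v^\#)'|^2=4$. This is precisely the well-known obstruction to symmetrization for Robin problems (it is why the Faber--Krahn inequality for the Robin Laplacian and the Talenti comparison of \cite{ant} require arguments other than plain Schwarz symmetrization), so estimating the gradient term and the boundary term separately, as you do, cannot work.

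The paper avoids this by never decoupling the two terms. Its Claim 1 applies Cauchy--Schwarz to the \emph{full} perimeter $P(t)$ of $\{u>t\}$, written as the interior part $\int_{\{u=t\}}d\mathcal H^{n-1}$ plus the part lying on $\partial\Omega$; the interior part is paired with $\int_{\{u=t\}}|Du|^{-1}\le-\mu'(t)$ via the coarea formula, while the part on $\partial\Omega$ is paired with $\int_{\partial\Omega\cap\partial\{u>t\}}\frac{1+\beta h}{\beta u}\,d\mathcal H^{n-1}$, which the Robin condition makes equal to the missing flux. The identity $\mu(t)=\int_{\{u=t\}}|Du|\,d\mathcal H^{n-1}+\int_{\partial\Omega\cap\partial\{u>t\}}\frac{\beta u}{1+\beta h}\,d\mathcal H^{n-1}$ (from integrating $-\Delta u=1$ over $\{u>t\}$) then yields the differential inequality \eqref{psquare}, to which the isoperimetric inequality can legitimately be applied because $P(t)$ there is the perimeter in all of $\R^n$. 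Integrating in $t$ and using Fubini with $\int_{\partial\Omega}h=m$ gives \eqref{isop_estimate}. If you want to salvage a variational/symmetrization route, you would need a substitute for P\'olya--Szeg\H{o} that retains the boundary contribution of the level sets --- which is essentially equivalent to reproving Claim 1.
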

\begin{proof}
We denote by $\mu(t)=|\{x\in \Omega: u(x)>t\}|$ the measure of the superlevel set $t$ of the function $u$, and by $P(t)=Per(\{x\in \Omega: u(x)>t\})$ the perimeter of the same set (which exists finite for a.e. $t$). Obviously $\mu(0)=|\Omega|$ and $P(0)=Per(\Omega)$, while $\mu(\max_{\Omega} u)=P(\max_{\Omega} u)=0$.
\newline\noindent \emph{Claim 1. }  The following inequality holds for a.e. $t>0$
\begin{equation}\label{psquare}
P(t)^2\le \mu(t)\left(-\mu'(t)+\int_{\partial \Omega\cap \{u>t\}} \frac{1+\beta h(\sigma)}{\beta u(\sigma)}\,d\mathcal{H}^{n-1}(\sigma)\right).
\end{equation}
First we use H\"older's inequality, and we get for a.e. $t>0$
\begin{align*}
P(t)^2=& \left(\int_{ \{u=t\}} d\mathcal{H}^{n-1}(\sigma) +\int_{\partial \Omega\cap \partial\{u>t\}} d\mathcal{H}^{n-1}(\sigma) \right)^2\\
\le& \left(\int_{ \{u=t\}} |Du(\sigma)| d\mathcal{H}^{n-1}(\sigma) +\int_{\partial \Omega\cap \partial\{u>t\}} \frac{\beta u(\sigma)}{1+\beta h(\sigma)}\,d\mathcal{H}^{n-1}(\sigma) \right)\\
&\left(\int_{ \{u=t\}} \frac{1}{|Du(\sigma)|} d\mathcal{H}^{n-1}(\sigma)+\int_{\partial \Omega\cap \partial\{u>t\}} \frac{1+\beta h(\sigma)}{\beta u(\sigma)}d\mathcal{H}^{n-1} (\sigma)\right).
\end{align*}
Then by co-area formula, for a.e. $t$, we have $\displaystyle-\mu'(t)\ge \int_{ \{u=t\}} \frac{1}{|Du(\sigma)|} d\mathcal{H}^{n-1}(\sigma)$. 
The Claim is proved if we establish the equality
$$\mu(t)=\int_{ \{u=t\}} |Du(\sigma)| d\mathcal{H}^{n-1}(\sigma) +\int_{\partial \Omega\cap \partial\{u>t\}} \frac{\beta u(\sigma)}{1+\beta h(\sigma)}\,d\mathcal{H}^{n-1}(\sigma).$$
Informally this can be done integrating $\Delta u$ on the superlevel set $t$ of $u$, and then using both the equation and the boundary condition in \eqref{limiting_pb} (here we use that $-\Delta u=f=1$). For a rigorous proof we refer to standard approximation arguments used in \cite{ant,bd,t79} .
\newline\noindent \emph{Claim 2. } For a.e. $t>0$ it holds
\begin{equation}\label{master}
\mu(t)\le\frac{1}{\omega_n^{2/n}n^2}\left(-\mu'(t)\mu(t)^{\frac{2}{n}}+|\Omega|^\frac{2}{n} \int_{\partial \Omega\cap \partial\{u>t\}} \frac{1+\beta h(\sigma)}{\beta u(\sigma)}d\mathcal{H}^{n-1} (\sigma)\right).
\end{equation}
This is a straightforward consequence of \eqref{psquare}, of the isoperimetric inequality
$$\left(\frac{\mu(t)}{\omega_n}\right)^{n-1}\le\left(\frac{P(t)}{n\omega_n}\right)^n,$$
and of the fact that $\mu(t)\le|\Omega|.$

The proof of the Theorem is complete once we integrate both sides of inequality \eqref{master} from $0$ to $\max_{\Omega} u$ and use Fubini's Theorem to deal with the last term (remember that $m=\int_{\partial\Omega}h\, d\mathcal{H}^{n-1}(\sigma)$).

It is easy to check that all inequalities hold as equality if $\Omega$ is a ball and $h$ is constant (notice that in such a case the value of $u$ on $\partial \Omega$ is constant). 
\end{proof} 
\begin{cor}
Solution to Open Problem 1 is the ball, with $h$ constant on the boundary.
\end{cor}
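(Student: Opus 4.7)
The plan is to combine the sharp upper bound \eqref{isop_estimate} with the classical isoperimetric inequality. First I would note that, once $|\Omega|$, $m$, $\beta$ and $n$ are fixed, the right-hand side of \eqref{isop_estimate} depends on the competitor $\Omega$ only through its perimeter $Per(\Omega)$, and does so affinely with positive slope. The isoperimetric inequality $Per(\Omega)\ge n\omega_n^{1/n}|\Omega|^{(n-1)/n}$, which is saturated exactly by balls, therefore identifies balls as the unique shapes of prescribed volume that minimize the right-hand side of \eqref{isop_estimate}.

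Next I would check by a direct computation that the ball with constant insulation saturates \eqref{isop_estimate} at precisely this minimal value. On a ball $B$ of volume $|\Omega|$ with $h_B\equiv m/Per(B)$, the radial solution $u_B$ of \eqref{limiting_pb} is
$$u_B(r)=\frac{R^2-r^2}{2n}+\frac{(1+\beta h_B)R}{n\beta},\qquad R=\bigl(|\Omega|/\omega_n\bigr)^{1/n},$$
and an elementary integration in polar coordinates shows that $\int_B u_B\,dx$ equals the right-hand side of \eqref{isop_estimate} evaluated at $Per(\Omega)=Per(B)=n\omega_n^{1/n}|\Omega|^{(n-1)/n}$. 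This is consistent with the equality clause of the previous theorem: for this radial configuration the level sets $\{u_B>t\}$ are concentric balls so the isoperimetric inequality in Claim~2 is sharp for every $t$, and $\mu(t)=|\Omega|$ for $t\le\min_{\partial B}u_B$ (while the boundary term vanishes beyond this threshold), so the passage from $\mu(t)^{2/n}$ to $|\Omega|^{2/n}$ is lossless.

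Combining these two facts, among all admissible pairs $(\Omega,h)$ with the prescribed volume $|\Omega|$ and mass $m$, the sharp estimate \eqref{isop_estimate} is minimized and saturated simultaneously by balls equipped with uniform insulation, and by the rigidity case of the isoperimetric inequality only by them; hence balls with constant $h$ solve Open Problem \ref{op1}, and uniqueness is guaranteed by the uniqueness statement of Theorem \ref{main_teo} on a given $\Omega$. The only nontrivial step is pinning down the equality case: both \eqref{isop_estimate} and the isoperimetric inequality must be equalities for every a.e.\ $t\in(0,\max u)$, which in turn forces $\Omega$ to be a ball and $u$ to be radial, whence $h$ must be constant on $\partial\Omega$ by the explicit formula \eqref{constant_cu}.
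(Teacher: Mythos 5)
There is a genuine gap, and it is the central step. Your argument establishes three facts: $\int_\Omega u \le U(\Omega)$ where $U(\Omega)$ denotes the right-hand side of \eqref{isop_estimate}; $U(B)\le U(\Omega)$ for every competitor $\Omega$ of the same volume (isoperimetric inequality, since $U$ is increasing in $Per(\Omega)$); and $\int_B u_B = U(B)$ for the ball with constant $h$. From these you conclude that the ball maximizes the heat content, but this is a non sequitur: a competitor $\Omega$ with larger perimeter has a \emph{strictly larger} upper bound $U(\Omega)>U(B)$, and nothing in your argument prevents $\int_\Omega u$ from landing in the interval $\bigl(U(B),U(\Omega)\bigr]$, i.e.\ from exceeding the ball's value. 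Minimizing and saturating an upper bound does not prove maximality of the bounded quantity; one needs the upper bound to be the \emph{same} for all competitors. At finite $\beta$ the bound \eqref{isop_estimate} is not competitor-independent, so your chain of inequalities cannot close.

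The missing idea is that Open Problem \ref{op1} concerns problem \eqref{bubuni}, not the Robin problem \eqref{limiting_pb} at finite $\beta$ (your explicit radial solution $u_B$ containing $(1+\beta h_B)R/(n\beta)$ shows you are working with \eqref{limiting_pb}). The paper's proof lets $\beta\to+\infty$, under which solutions of \eqref{limiting_pb} converge to solutions of \eqref{bubuni} and, crucially, the term $|\Omega|^{2/n}Per(\Omega)/\beta$ in \eqref{isop_estimate} disappears. The resulting bound
\[
\int_\Omega v\,dx\le \frac{1}{\omega_n^{2/n}n^2}\left(\frac{n|\Omega|^{\frac{2}{n}+1}}{n+2}+|\Omega|^{\frac2n}m\right)
\]
depends only on $|\Omega|$ and $m$, hence is identical for all admissible shapes, and it is attained by the ball with constant $h$; this is what makes the ball the maximizer. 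No isoperimetric inequality is needed at this stage (it was already used inside the proof of \eqref{isop_estimate} itself). As a secondary point, invoking the uniqueness of Theorem \ref{main_teo} does not give uniqueness of the optimal \emph{shape}, since that theorem fixes $\Omega$; a rigidity discussion of the equality cases in the proof of \eqref{isop_estimate} would be required for that, which the paper itself does not carry out in detail.
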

\begin{proof}
Consider a smooth open set $\Omega$. For a given $h$, let $v$ be the solution to \eqref{bubuni}. A solution $u$ to \eqref{limiting_pb} converges to $v$ as $\beta$ goes to $+\infty$. Then inequality \eqref{isop_estimate} becomes
\begin{equation}
\int_\Omega v \;dx\le \frac{1}{\omega_n^{2/n}n^2}\left(\frac{n|\Omega|^{\frac{2}{n}+1}}{n+2}+|\Omega|^\frac{2}{n} m\right). 
\end{equation}
Inequality holds as an equality if $\Omega$ is a ball and $h$ is constant.

\end{proof}

}
\section*{Acnowledgements}
This work has been partially supported by a MIUR-PRIN 2017 grant ``Qualitative and quantitative aspects of nonlinear PDE's'', PON Ricerca e Innovazione 2014-2020, and by GNAMPA of INdAM.


\begin{thebibliography}{10}
\bibitem{ant} Alvino A., Nitsch C., Trombetti C., A Talenti comparison result for solutions to elliptic problems with Robin boundary conditions, preprint \url{https://arxiv.org/abs/1909.11950}
\bibitem{ab80} Acerbi E., Buttazzo G., Reinforcement problems in the calculus of variations. Ann. Inst. Henri Poincar\'e C, 3, 273-284 (1986).
\bibitem{brca80}{Brezis H., Caffarelli L., Friedman A.}, {Reinforcement problems for elliptic equations and variational inequalities}{Ann. Mat. Pura Appl., \textbf{123} (4) (1980), 219--246}
\bibitem{bubu05}{Bucur D., Buttazzo G.}, {Variational Methods in Shape Optimization Problems}, {Progress in Nonlinear Differential Equations \textbf{65}, Birkh\"auser Verlag, Basel (2005)}
\bibitem{bbn1} Bucur D., Buttazzo G., Nitsch C.,
Two optimization problems in thermal insulation. Notices Am. Math. Soc. 64, No. 8, 830-835 (2017).
\bibitem{bbn2} Bucur D., Buttazzo G., Nitsch C.,
Symmetry breaking for a problem in optimal insulation. J.  Math. Pures et Appl. 107(4), 451-463 (2017).
\bibitem{bd} Bucur D., Daners D., An alternative approach to the Faber-Krahn inequality for Robin
problems. Calc. Var. Partial Differential Equations 37, 75-86 (2010).
\bibitem{bu88}{Buttazzo G.}, {Thin insulating layers: the optimization point of view}, {Proceedings of ``Material Instabilities in Continuum Mechanics and Related Mathematical Problems'', Edinburgh 1985--1986, edited by J. M. Ball, Oxford University Press, Oxford (1988), 11--19}
\bibitem{cf80} Caffarelli, L. and Friedman, A. Reinforcement problems in elastoplasticity. Rocky Mountain J. Math., t. \textbf{10}, (1980), 155--184.
\bibitem{cakri} Caffarelli, L. A. and Kriventsov, D. A Free Boundary Problem Related to Thermal Insulation. Comm. in PDE. \textbf{41}, issue 7 (2016) 
\bibitem{dnt1} Della Pietra F., Nitsch C., Trombetti C., An optimal insulation problem, to appear on Math. Annal. \url{https://arxiv.org/abs/2005.11934}
\bibitem{d99} Denzler, J. Windows of given area with minimal heat diffusion. Trans. Am. Math. Soc. \textbf{351}(2), (1999), 569--580.
\bibitem{f80} Friedman, A. Reinforcement of the principal eigenvalue of an elliptic operator. Arch. Ration. Mech. Anal. \textbf{73}(1), 1--17 (1980).
\bibitem{t79} Talenti G., Nonlinear elliptic equations, rearrangements of functions and Orlicz spacesElliptic equations and rearrangements. Ann. Mat. Pura e Appl. 120, 159--184 (1979).

\end{thebibliography}
\end{document}